\newtheorem{theorem}{\sc Theorem}[section]
\newtheorem{lemma}[theorem]{\sc Lemma}
\newtheorem{proposition}[theorem]{\sc Proposition}
\theoremstyle{remark}
\numberwithin{equation}{section}
\newcommand{\UU}{\mathcal U}
\newcommand{\GG}{\mathcal G}
\newcommand{\N}{\mathbb{N}}
\begin{document}
\title[Words of Engel type]{Words of Engel type are concise in residually finite groups}
\author{Eloisa Detomi}
\address{Dipartimento di Matematica, Universit\`a di Padova\\
 Via Trieste 63\\ 35121 Padova \\ Italy}
\email{detomi@math.unipd.it}
\author{Marta Morigi}
\address{Dipartimento di Matematica, Universit\`a di Bologna\\
Piazza di Porta San Donato 5 \\ 40126 Bologna \\ Italy}
\email{marta.morigi@unibo.it}
\author{Pavel Shumyatsky}
\address{Department of Mathematics, University of Brasilia\\
Brasilia-DF \\ 70910-900 Brazil}
\email{pavel@unb.br}
\thanks{This research was partially supported by Universit\`a 
di Padova (Progetto di Ricerca di Ateneo:``Invariable generation of groups"). The second author was also supported by GNSAGA (INDAM), 
and the third author by  FAPDF and CNPq.}
\keywords{Words, Conciseness}
\subjclass[2010]{Primary 20E26, 20F45, 20F10, 20E10}

\begin{abstract} Given a group-word $w$ and a group $G$, the verbal subgroup
$w(G)$ is the one generated by all $w$-values in $G$. The word $w$ is said to be concise if $w(G)$ is finite whenever  the set 
of $w$-values in $G$ is finite. In the sixties P. Hall asked whether every word is concise but later Ivanov answered this question in the negative. 
On the other hand, Hall's question remains wide open in the class of residually finite groups. In the present article we show that various generalizations of the Engel word are concise in residually finite groups.
\end{abstract}
\maketitle
\section{Introduction} 

Let $w=w(x_1,\dots,x_k)$ be a group-word, that is, a nontrivial element of the free group $F$ with free generators $x_1,x_2,\dots$. The verbal 
subgroup $w(G)$ of a group $G$ determined by $w$ is the subgroup generated by the set $G_w$ consisting of all values $w(g_1,\ldots,g_k)$, 
where $g_1,\ldots,g_k$ are elements of $G$.  A word $w$ is said to be concise if whenever $G_w$ is finite for a group $G$, it always follows 
that $w(G)$ is finite. More generally, a word $w$ is said to be concise in a class of groups $\mathcal X$ if whenever $G_w$ is finite for a group $G\in\mathcal X$, it always follows that $w(G)$ is finite. P. Hall asked whether every word is concise, but later Ivanov proved that this problem has a negative solution in its general form \cite{ivanov} (see also \cite[p.\ 439]{ols}). On the other hand, many relevant words are known to be concise. In particular, it was shown in \cite{jwilson} that the multilinear commutator words are concise. Such words are also known under the name of outer commutator words and are precisely the words that can be written in the form of multilinear Lie monomials. For example the word $[[x_1,x_2,x_3],[x_4,x_5]]$ is a multilinear commutator. Merzlyakov showed that every word is concise in the class of linear groups \cite{merzlyakov} while Turner-Smith proved that every word is concise in the class of  residually finite groups all of whose quotients are again residually finite \cite{TS2}. There is an open problem whether every word is concise in the class of residually finite groups 
(cf. Segal \cite[p.\ 15]{Segal} or Jaikin-Zapirain \cite{jaikin}). In recent years several words were shown to be concise in residually finite groups while their conciseness in the class of all groups remains unknown. 
In particular, it was shown in \cite{as} that if $w$ is a multilinear commutator word and $n$ is a prime-power, then the word $w^n$ is concise
in the class of residually finite groups. Further examples of words that are concise in residually finite groups were discovered in \cite{gushu}.

In the present article we deal with words of Engel type. Set $[x,{}_1y]=[x,y]=x^{-1}y^{-1}xy$ and $[x,{}_{i+1}y]=[[x,{}_{i}y],y]$ for $i\geq1$. The word $[x,{}_ny]$ is called the $n$th Engel word. Due to \cite{fmt} and, independently, \cite{ar} we know that the $n$th Engel word is concise whenever $n\leq4$. It is still unknown whether the $n$th Engel word is concise in the case where $n\geq5$. In the present article we show, among other things, that the Engel words are concise in residually finite groups. This is an immediate consequence of the following theorem.

\begin{theorem}\label{ra} Suppose that $w=w(x_1,\ldots,x_k)$ is a multilinear commutator word. For any $n\geq1$ the word $[w,{}_ny]$ is concise in residually finite groups.
\end{theorem}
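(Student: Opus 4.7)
The plan is to reduce to the case where $G$ is finitely generated, extract an $n$-Engel condition on $w$-values in the quotient $G/V$ where $V:=v(G)$, and then apply a structural theorem together with Dicman's lemma to force $V$ to be finite.

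Assume $G$ is residually finite and $G_v$ is finite, where $v=[w,{}_ny]$; set $V=v(G)=\langle G_v\rangle$. Since each $v$-value involves only finitely many elements of $G$ and there are only finitely many $v$-values, all of them are realised inside a finitely generated subgroup $H\leq G$, which is residually finite and satisfies $v(H)=V$. Hence we may assume $G$ is finitely generated. In the quotient $G/V$ every evaluation of $[w,{}_ny]$ is trivial, so every $w$-value of $G/V$ is a right $n$-Engel element. At this point I would invoke a Shumyatsky-style theorem stating that in a residually finite group in which every $w$-value is $n$-Engel ($w$ a multilinear commutator), $w(G)$ is locally nilpotent. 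Applied to $G/V$, this gives that $w(G)V/V=w(G/V)$ is locally nilpotent.

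Since each $v$-value is obtained as an iterated commutator of a $w$-value with elements of $G$, we have $V\leq w(G)$, and so $w(G)$ is (locally nilpotent)-by-$V$. Moreover $G_v$ is a finite normal subset of $G$, its centraliser has finite index, and every element of $G_v$ has a finite $G$-conjugacy class. By Dicman's lemma, in order to conclude $|V|<\infty$ it suffices to show that every element of $G_v$ has finite order; then the finite normal torsion set $G_v$ generates a finite subgroup, which is $V$.

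The principal obstacle is precisely this last step: extracting torsion of the generators of $V$ from the soft structural information ``$w(G)/V$ is locally nilpotent and $V$ is generated by a finite normal FC-set'' requires genuine use of residual finiteness, and in a general group the implication fails. I expect the argument to combine Wilson--Zelmanov type results on residually finite $n$-Engel groups with a commutator calculation exploiting the (local) nilpotence of $w(G)$ modulo $V$ to bound the orders of the $v$-values; once torsion is in hand, Dicman's lemma closes the proof.
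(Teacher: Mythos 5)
Your outline captures the correct overall shape of the argument: reduce to a manageable situation, observe that $w$-values become right $n$-Engel modulo $V=v(G)$, exploit Zelmanov-type local nilpotency, and finish by showing that the $v$-values are torsion (the paper factors out the finite torsion subgroup of the finitely generated abelian group $v(G)$ and proves $v(G)=1$ in the torsion-free quotient, which is the same endpoint as your Dicman step). But there are two genuine gaps. First, you apply the ``$w$-values $n$-Engel $\Rightarrow w(G)$ locally nilpotent'' theorem to $G/V$; that result (Proposition \ref{non22}) is a statement about \emph{residually finite} groups, and $G/V$, being a quotient of a residually finite group, need not be residually finite --- this is exactly the obstruction that makes conciseness in residually finite groups hard (Turner-Smith's theorem needs all quotients to be residually finite). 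The paper circumvents this by staying inside $G$: it first shows $G$ is residually soluble, hence some term $G^{(j)}$ of the derived series centralizes $v(G)$, so the $\delta_j$-values (which are $w$-values by Lemma \ref{lem:delta_k}) are honestly $(n+2)$-Engel \emph{in $G$ itself}, where the residual finiteness of $G$ can legitimately be used. (There is also a small unaddressed conversion from right Engel to left Engel, handled in the paper by Heineken's lemma together with the symmetry of $G_w$.)

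Second, and more seriously, the step you yourself flag as ``the principal obstacle'' --- proving that every $v$-value has finite order --- is precisely where essentially all of the work of the paper lies, and you offer no argument for it. The paper's route is long: a quantitative soluble case (Lemma \ref{soluble}, which rests on the Fern\'andez-Alcober--Morigi normal series of Proposition \ref{series} and on Casolo's Lemma \ref{casolo}); a bootstrapping argument via Schur's theorem to show that $v(G)$ is actually central in $G$; and then the key Lemma \ref{Pavel}, which says that in a nilpotent group, if the set of conjugates of the values $w(a_1^i,\dots,a_k^i)$ over all $i$ coprime to $p$ is finite, then the subgroup they generate is finite --- applied to the word $[x_1,{}_{n-1}x_2]$ evaluated at $(t^{-g},t)$, whose ``powered'' values are again $v$-values. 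Without some version of this last ingredient, torsion of the $v$-values does not follow from the soft data ``$w(G)$ is locally nilpotent mod $V$ and $V$ is generated by a finite normal FC-set,'' as you note. So the proposal is a plausible strategic sketch, but the core of the proof is missing.
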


Recall that a word $w$ is a law in a group $G$ if $w(G)=1$. A word $w$ is said to imply virtual nilpotency if every finitely generated metabelian group where $w$ is a law has a nilpotent subgroup of finite index. Such words admit several important characterizations (see \cite{black,bume,groves}). It follows from Gruenberg's result \cite{gruen} that the Engel words imply virtual nilpotency. A word $w$ is boundedly concise in a class of groups $\mathcal X$ if for every integer $m$ there exists a number $\nu=\nu(\mathcal X,w,m)$ such that whenever $|G_w|\leq m$ for a group $G\in\mathcal X$ it always follows that $|w(G)|\leq\nu$. Fern\'andez-Alcober and Morigi \cite{fernandez-morigi} showed that every word which is concise in the class of all groups is actually boundedly concise. However it is unclear whether every word which is concise in residually finite groups is boundedly concise. Our next theorem deals with this question for words implying virtual nilpotency.

\begin{theorem}\label{rb} Words implying virtual nilpotency are boundedly concise in residually finite groups.
\end{theorem}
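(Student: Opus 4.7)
The plan is a compactness argument via pseudofinite groups, combined with standard commutator and module-theoretic machinery. Suppose for contradiction that the bound fails for some integer $m$: there exist residually finite groups $G_i$ with $|(G_i)_w|\le m$ but $|w(G_i)|$ unbounded. Using that each $G_i$ is residually finite, I pass to a finite quotient $\tilde G_i$ that separates at least $i$ distinct elements of $w(G_i)$; then $|(\tilde G_i)_w|\le m$ and $|w(\tilde G_i)|\ge i$, so the problem reduces to a sequence of finite groups with uniformly bounded $|G_w|$ but unbounded $|w(G)|$.

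Form the ultraproduct $G^{*}=\prod_{i}\tilde G_i/\UU$ along a non-principal ultrafilter $\UU$. By \L os's theorem $|G^{*}_w|\le m$, and since $G^{*}$ is pseudofinite, every finitely generated subgroup of $G^{*}$ is residually finite. Applying B.\,H. Neumann's lemma to the conjugation-invariant set $G^{*}_w$ yields $[G^{*}:C_{G^{*}}(w(G^{*}))]\le m^{m}$, so $|w(G^{*})/Z(w(G^{*}))|\le m^{m}$. Schur's theorem, in Wiegold's quantitative form, then shows $[w(G^{*}),w(G^{*})]$ is finite of order bounded in terms of $m$. Passing to the quotient of $G^{*}$ by this characteristic finite subgroup, I may assume $w(G^{*})$ is abelian.

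Now $w(G^{*})$ is an abelian finitely generated subgroup of the pseudofinite group $G^{*}$ (hence residually finite), generated as a $\mathbb{Z}$-module by the orbits of the $m$ elements of $G^{*}_w$ under the action of the finite group $Q=G^{*}/C_{G^{*}}(w(G^{*}))$ of order $\le m^{m}$. At this point I invoke the hypothesis that $w$ implies virtual nilpotency: the semidirect product $w(G^{*})\rtimes Q$ is a finitely generated metabelian group in which $w$ becomes a law modulo the abelian normal subgroup $w(G^{*})$. Using the commutator identities in free metabelian groups that characterize words implying virtual nilpotency (in the spirit of Gruenberg and the Bume--Groves framework), the $\mathbb{Z}[Q]$-module $w(G^{*})$ must have both $\mathbb{Z}$-rank and torsion bounded by a function of $m$. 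Hence $|w(G^{*})|\le\nu(m)$, contradicting $|w(\tilde G_i)|\to\infty$.

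The principal obstacle I expect is this last step: extracting a concrete numerical bound on $|w(G^{*})|$ from the virtual-nilpotency hypothesis, which is phrased only for finitely generated metabelian groups in which $w=1$, whereas in our setting $w$ is a law merely on a quotient. The analysis factors through the module structure of $w(G^{*})$ over the bounded-order group ring $\mathbb{Z}[Q]$, combined with the fact that finitely generated virtually nilpotent groups have bounded Hirsch length and controlled torsion; both invariants need to be read off uniformly from the $Q$-action on $w(G^{*})$.
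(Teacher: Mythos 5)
Your compactness framework (reduce to finite groups, pass to an ultraproduct, contradict a hypothetical unbounded sequence) is indeed one ingredient of the paper's argument (Proposition \ref{quantitative}), but the proof has a genuine gap exactly at the step you flag, and it is not a technicality: nothing in the proposal uses the hypothesis that $w$ implies virtual nilpotency in a legitimate way. That hypothesis is a statement about finitely generated metabelian groups in which $w$ is a \emph{law}. In $G^{*}$, and in $w(G^{*})\rtimes Q$, the word $w$ is not a law; the observation that ``$w$ becomes a law modulo $w(G^{*})$'' holds for every group whatsoever and therefore carries no information about the $\mathbb{Z}[Q]$-module structure of $w(G^{*})$. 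The assertion that virtual nilpotency of $w$ forces the $\mathbb{Z}$-rank and the torsion of $w(G^{*})$ to be bounded in terms of $m$ is never argued, and even if the rank were bounded this would not give finiteness of $w(G^{*})$ unless the rank is zero. The way to exploit the hypothesis is to apply it where $w$ genuinely is a law, namely to $N/w(N)$ with $N=C_G(w(G))$ (note that $w(N)\le Z(N)$). The quantitative theorem of Burns and Medvedev then produces $M\le N$ with $M/w(N)$ nilpotent of $w$-bounded class and $N/M$ of $w$-bounded exponent; after first reducing to the case where $G$ is generated by at most $mk$ elements, the positive solution of the restricted Burnside problem bounds $|N/M|$, so $G$ has a normal nilpotent subgroup of bounded class and bounded index. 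Only at that point does conciseness become available, via Turner-Smith's theorem for virtually nilpotent groups, and the ultraproduct argument (Lemma \ref{Xultraproducts} plus Proposition \ref{quantitative}) converts it into a bound depending only on $w$ and $m$.

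Two further points. First, \L os's theorem does give $|G^{*}_w|\le m$, since that is a first-order condition, but it does not give $|w(G^{*})|\ge k$: the verbal subgroup is a countable union of definable sets, not a definable set, so to see that $w(G^{*})$ is large one needs the bounded-width lemmas (Lemmas \ref{omega into product} and \ref{subset of bounded width}); your claimed contradiction at the end silently relies on this. Second, the claim that every finitely generated subgroup of a pseudofinite group is residually finite is unjustified (ultraproducts of finite groups need not have this property), though nothing in your argument actually uses it.
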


Throughout the article we use the left-normed notation for commutators, that is: $[b_1,b_2,b_3,\ldots,b_k]=[\ldots[[b_1,b_2],b_3],\ldots,b_k]$. Recall that the lower central word $[x_1,\ldots,x_k]$ is usually denoted by $\gamma_{k}$. The corresponding verbal subgroup $\gamma_k(G)$ is the familiar $k$th term of the lower central series of the group $G$. It seems that in the context of the present article the words $\gamma_{k}$ are the most tractable. 

\begin{theorem}\label{rc} Let $k$, $n$ and $q$ be positive integers and let $w$ be the word $[x_1,\ldots,x_k]^q$. Both words $[y,{}_nw]$ and $[w,{}_ny]$ are boundedly concise in residually finite groups.
\end{theorem}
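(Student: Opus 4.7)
My plan is to prove the two assertions of Theorem \ref{rc} separately, since they reduce differently to the preceding theorems.

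For $v_1 := [w,{}_n y]$ with $w=[x_1,\ldots,x_k]^q$: I would show that $v_1$ implies virtual nilpotency and then invoke Theorem \ref{rb}. Let $G$ be a finitely generated metabelian group in which $v_1$ is a law, and write $A=G'$. Viewing $A$ additively as a $\mathbb{Z}[G/A]$-module via conjugation, every $w$-value equals $q\cdot g$ for some $g\in\gamma_k(G)\le A$, and the identity $[qg,{}_n y]=1$ translates into $qg(y-1)^n = 0$ for all such $g$ and every $y\in G$. Hence $q(y-1)^n$ annihilates $\gamma_k(G)$ for each $y$; passing to the quotient module $M=\gamma_k(G)/\gamma_k(G)[q]$, each $y\in G$ acts on $M$ as a unipotent automorphism with $(y-1)^n=0$. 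A Kolchin-style argument (after tensoring with $\mathbb{Q}$ and using that $M$ is finitely generated over $\mathbb{Z}[G/A]$ by Hall's theorem on finitely generated metabelian groups) then shows that the image of $G$ in $\mathrm{Aut}(M)$ is nilpotent. Extracting full virtual nilpotency of $G$ from this module-level Engel condition on $\gamma_k(G)$, together with the boundedness of the exponent-$q$ subgroup $\gamma_k(G)[q]$, is the key technical step; with it in hand, Theorem \ref{rb} delivers the desired bounded conciseness of $v_1$ in residually finite groups.

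For $v_2:=[y,{}_n w]$ a different strategy is required: in a metabelian group, every $w$-value (for $k\ge 2$) lies in the abelian subgroup $G'$, so $[h,{}_2 u]=1$ for every $h\in G$ and every $u\in G'$. Hence $v_2$ is already trivially a law in every finitely generated metabelian group when $n\ge 2$, and Theorem \ref{rb} cannot be applied. Instead, I would work directly in a residually finite group $G$ whose set of $v_2$-values has cardinality at most $m$. For each $w$-value $u$, the left $n$-Engel sink $\{[h,{}_n u]:h\in G\}$ has size at most $m$; quantitative results on elements with finite left Engel sinks in residually finite groups (in the spirit of Shumyatsky and collaborators) show that each such $u$ is a left $n$-Engel element modulo a normal subgroup of bounded order. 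Collecting these bounded subgroups as $u$ ranges over a bounded set of representatives of the $w$-values produces a normal subgroup $N$ of bounded order with $[h,{}_n u]\in N$ for every $h\in G$ and every $w$-value $u$; consequently $v_2(G)\le N$ is bounded in terms of $m$, $k$, $n$ and $q$.

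The main obstacle is the second case. Whereas the first reduces cleanly via the module-theoretic analysis above to Theorem \ref{rb}, the second lies outside the metabelian framework and must be handled by a quantitative use of recent Engel-sink results in residually finite groups, matched carefully against the verbal structure of $w$ so as to produce a bound depending only on $m$, $k$, $n$ and $q$.
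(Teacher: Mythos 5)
Your proposal does not work, and the main problem is in the first half. For $k\ge 2$ and $q\ge 2$ the word $v_1=[w,{}_ny]$ with $w=[x_1,\ldots,x_k]^q$ does \emph{not} imply virtual nilpotency: in the wreath product $C_q\wr C$ (with $C$ infinite cyclic) the derived subgroup is abelian of exponent $q$, so every $\gamma_k$-value has trivial $q$-th power, hence $G_w=\{1\}$ and $v_1$ is a law in $C_q\wr C$, which is finitely generated metabelian but not virtually nilpotent. By the Burns--Medvedev characterization quoted in Section 6, Theorem \ref{rb} is therefore not applicable to $v_1$. Your own module computation shows where this breaks: you need ``boundedness of the exponent-$q$ subgroup $\gamma_k(G)[q]$,'' but in $C_q\wr C$ that subgroup is all of $\gamma_k(G)$ and is infinite, so the quotient $M$ on which your Kolchin-style argument runs can be trivial while $G$ is far from virtually nilpotent. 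The reduction to Theorem \ref{rb} is thus not a gap to be filled but a dead end.

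For $v_2=[y,{}_nw]$ you correctly observe that Theorem \ref{rb} cannot be used (the word is a law in every metabelian group once $k\ge 2$ and $n\ge 2$), but what you offer in its place is essentially a restatement of the problem: the assertion that an element whose left Engel sink has at most $m$ elements is ``$n$-Engel modulo a normal subgroup of bounded order'' is not an available black box, and proving it for $w$-values is exactly the content of the paper's argument. The actual proof proceeds quite differently for both words: one reduces to finite $G$, assumes $v(G)$ abelian (Lemma \ref{zero}), shows that after factoring out a subgroup of bounded order all $w$-values become Engel, so $w(G)$ is nilpotent; one then reduces to the case where $w(G)$ is a $p$-group via Hall subgroups, invokes the \emph{weak rationality} of $[x_1,\ldots,x_k]^q$ (Guralnick--Shumyatsky) to see that $g^i$ is again a $w$-value for $(i,p)=1$, places the relevant elements in a subgroup of bounded nilpotency class using the Zelmanov-type Proposition \ref{var22}, and finally applies Lemma \ref{Pavel} to bound the order of each individual $v$-value. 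The weak rationality and Lemma \ref{Pavel} are the engine of the whole proof, and neither appears in your outline; without them there is no mechanism for converting ``few $v$-values'' into ``each $v$-value has bounded order.''
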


It was conjectured in \cite{gustavoshu} that each word that is concise in residually finite groups is boundedly concise. As of now, the only words for which the conjecture has not yet been confirmed are those that are dealt with in Theorem \ref{ra}. In the case where $w=[[x_1,x_2],[x_3,x_4]]$ is the metabelian word an ad-hoc argument enables us to show that the word $[w,{}_ny]$ is boundedly concise in residually finite groups.

\begin{theorem}\label{rd} Let $w=[[x_1,x_2],[x_3,x_4]]$. For any $n\geq1$ the word $[w,{}_ny]$ is boundedly concise in residually finite groups.
\end{theorem}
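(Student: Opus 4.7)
The approach I would take is to combine Theorem \ref{ra} with the bounded conciseness of multilinear commutator words in the class of all groups, exploiting features specific to the metabelian word $w=[[x_1,x_2],[x_3,x_4]]$.

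First I would reduce to the case of a finite group. Theorem \ref{ra} already gives that $L:=[w,{}_ny](G)$ is a finite subgroup of $G$. Since $G$ is residually finite, some normal subgroup $N$ of finite index satisfies $N\cap L=1$, and then $G/N$ is a finite quotient with $|(G/N)_{[w,{}_ny]}|\le m$ and $|[w,{}_ny](G/N)|\ge |L|$. Thus it suffices to prove a bound $|[w,{}_ny](H)|\le \nu(m,n)$ whenever $H$ is a finite group with $|H_{[w,{}_ny]}|\le m$.

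Next I would compare with the outer commutator word
\[
v(x_1,x_2,x_3,x_4,y_1,\ldots,y_n)=[[x_1,x_2],[x_3,x_4],y_1,\ldots,y_n],
\]
which by \cite{jwilson} and \cite{fernandez-morigi} is boundedly concise in the class of all groups. Every $[w,{}_ny]$-value is a $v$-value obtained by coalescing $y_1=\cdots=y_n=y$, so $[w,{}_ny](H)\le v(H)$, and the task reduces to bounding $|H_v|$ in terms of $m$ and $n$.

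The heart of the argument is this last reduction, where the fact that $w$ is the metabelian law is crucial. Since every $w$-value lies in $H''$, the subsequent commutators with the $y_i$ take place inside $H''$ and may be studied through the successive abelian quotients of the derived or lower central series of $H''$. On each such layer the map $(y_1,\ldots,y_n)\mapsto[[x_1,x_2],[x_3,x_4],y_1,\ldots,y_n]$ becomes an $n$-multilinear form in the $y_i$, so it is determined by its diagonal values through the standard polarization identity---up to the coefficient $n!$, which forces a separate analysis of the Sylow $p$-subgroups for primes $p\le n$. This is where the metabelian character of $w$ is essential: without it, the $w$-values would not lie deep enough in the derived series for the layerwise linearization to work.

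The main obstacle will be making this polarization step quantitative and lifting bounds on $|H_v|$ modulo each layer to a bound on $|H_v|$ itself, controlling the contributions from the several layers and from the primes where polarization encounters torsion. Once $|H_v|$ is bounded in terms of $m$ and $n$, the Fern\'andez-Alcober--Morigi theorem yields $|v(H)|\le\nu(m,n)$ and hence $|[w,{}_ny](H)|\le\nu(m,n)$, as required.
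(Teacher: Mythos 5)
There is a genuine gap, and it sits exactly at what you call the heart of the argument: the passage from the diagonal word $u=[w,{}_ny]$ to the multilinear commutator word $v=[[x_1,x_2],[x_3,x_4],y_1,\ldots,y_n]$. Knowing $|H_u|\le m$ does not bound $|H_v|$, and no bound of the kind you need can exist. Indeed, if $H_0$ is any finite group in which $u$ is a law but $v$ is not --- that is, $[g,{}_ny]=1$ for every $w$-value $g$ and every $y$, while $[H_0'',{}_nH_0]\neq 1$; such groups exist because $H_0''$ is merely \emph{generated} by the $w$-values and an Engel condition on generators does not pass to products --- then the direct power $H=H_0^{k}$ still satisfies $|H_u|=1$, whereas $|H_v|=|(H_0)_v|^{k}\to\infty$. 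So the proposed reduction to ``bounding $|H_v|$ in terms of $m$ and $n$'' is unattainable. The polarization heuristic also fails for structural reasons: on an abelian layer $A$ of a normal series the map $y\mapsto[a,y]$ is not additive in $y$ (one has $[a,y_1y_2]=[a,y_2][a,y_1][a,y_1,y_2]$ with the error term again in $[A,H]$, not in a deeper layer), so genuine multilinearity in the $y_i$ requires a \emph{central} series of bounded length in $H$, i.e.\ it presupposes that the relevant subgroups are nilpotent of bounded class --- which your sketch never establishes and which is in fact the main content of the proof. In addition, the number of layers of the derived series of $H''$ is not a priori bounded, and the $n!$ torsion arising from polarization can only be controlled after one knows that $G'$ is nilpotent, so that its Hall subgroups are normal.

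For contrast, the paper proceeds quite differently. After reducing to a finite group with $u(G)$ abelian (Lemma \ref{zero}), it notes that every value of $[x,{}_{(n+2)}[x_1,x_2]]$ is a $u$-value and applies Proposition \ref{rational1} to factor out a bounded subgroup, after which all commutators are $(n+2)$-Engel and $G'$ is nilpotent by Lemma \ref{fitting}. For a fixed value $[g,{}_nt]$ with $g=[a,b]$, the subgroup $\langle a,b,a^t,b^t,\ldots,a^{t^n},b^{t^n}\rangle$ has bounded nilpotency class by the Zelmanov-type Proposition \ref{var22}; weak rationality of $[x_1,x_2]$ then supplies the powered values needed to apply Lemma \ref{Pavel} and bound the order of each $u$-value, first when $G'$ is a $p$-group and then in general via Hall subgroups. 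Any repair of your outline would need essentially all of these ingredients, so the proposal does not constitute an alternative proof.
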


It can be easily seen that the problem on conciseness of words in residually finite groups is equivalent to the same problem in profinite groups. On the other hand, perhaps the very concept of conciseness in profinite groups can be broadened. It was conjectured in \cite{dms} that if $w$ is a word and $G$ a profinite group such that the set $G_w$ is countable, then $w(G)$ is finite. The conjecture was confirmed for various words $w$ (see \cite{dms} for details) but not for Engel words. In view of the results obtained in the present article the following question seems to be of interest.
\bigskip

{\it Let $w$ be the $n$-Engel word with $n\geq2$ and suppose that $G$ is a profinite group with only countably many $w$-values. Is $w(G)$ necessarily finite?}
\bigskip

In the next section we describe some important tools developed in the context of the restricted Burnside problem. Section 3 is a collection of mostly well-known facts which are used in the proofs of our main results. Theorem \ref{ra} is proved in Section 4. Theorems \ref{rc} and \ref{rd} are proved in Section 5. Finally, Section 6 is devoted to the proof of Theorem \ref{rb}.

\section{On Engel groups and the restricted Burnside problem}

A variety is a class of groups defined by equations. More precisely, if $W$ is a set of words, the class of all groups satisfying the laws $W\equiv 1$ is called the variety determined by $W$. By a well-known theorem of Birkhoff \cite[2.3.5]{Rob}, varieties are precisely classes of groups closed with respect to taking subgroups, quotients and Cartesian products of their members. Some interesting varieties of groups have been discovered in the context of the 
restricted Burnside problem, solved in the affirmative by Zelmanov \cite{ze1,ze2}.

It is well-known that the solution of the restricted Burnside problem is equivalent to each of the following statements.

\begin{itemize}
\item[$(i)$] The class of locally finite groups of exponent $n$ is a variety.
\item[$(ii)$] The class of locally nilpotent groups of exponent $n$ is a variety.
\end{itemize}

Recall that a group is said to locally have some property if all its finitely generated subgroups have that property. A group $G$ is of finite exponent $n$ if and only if each element of $G$ has order dividing $n$. A number of varieties of (locally nilpotent)-by-soluble groups were presented in \cite{shu1, shu2}. 

The solution of the restricted Burnside problem strongly impacted our understanding of Engel groups. An element $x\in G$ is called a (left) Engel element if for any $g\in G$ there exists $n=n(x,g)\geq 1$ such that $[g,_n x]=1$. If $n$ can be chosen independently of $g$, then $x$ is a (left) $n$-Engel element. A group $G$ is called $n$-Engel if all elements of $G$ are $n$-Engel. In \cite{ze0} Zelmanov made a remark that the eventual solution of the 
restricted Burnside problem would imply that the class of locally nilpotent $n$-Engel groups is a variety (see also Wilson \cite{w}). It follows that if $G$ is a finite $n$-Engel $d$-generator group, then $G$ is nilpotent with $(d,n)$-bounded nilpotency class. Here and throughout the article we use the expression ``$(a,b,\dots)$-bounded" to mean that a quantity is bounded by a certain number depending only on the parameters $a,b,\dots$. The interested reader is refered to the survey \cite{trau} and references therein for further results on finite and residually finite 
Engel groups. Groups with $n$-Engel word-values were considered in \cite{BSTT,var}. In particular, we will require the following result from \cite{var}.

\begin{theorem}\label{tota} Let $m,n$ be positive integers, and $w$ a multilinear commutator word. The class of all groups $G$ in which the $w^m$-values are $n$-Engel and the verbal subgroup $w^m(G)$ is locally nilpotent is a variety.
\end{theorem}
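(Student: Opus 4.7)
The plan is to invoke Birkhoff's theorem characterising varieties as exactly those classes of groups closed under taking subgroups, homomorphic images, and Cartesian products. Let $\mathcal{C}$ denote the class described in the statement. Closure of $\mathcal{C}$ under subgroups and quotients is routine: every $w^m$-value in a subgroup or a quotient arises by restriction or by image from a $w^m$-value in the ambient group, so inherits the $n$-Engel property; and both subgroups and quotients of locally nilpotent groups remain locally nilpotent, so the condition on the verbal subgroup survives as well.

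The substantive part is closure under Cartesian products. Take $G=\prod_{\lambda\in\Lambda}G_\lambda$ with each factor in $\mathcal{C}$. A $w^m$-value $v=(v_\lambda)$ in $G$ has $v_\lambda$ equal to a $w^m$-value in $G_\lambda$, and for any $g=(g_\lambda)\in G$ the iterated commutator $[g,{}_{n}v]$ equals $([g_\lambda,{}_{n}v_\lambda])_\lambda$, which is trivial componentwise. Hence the $w^m$-values in $G$ are automatically $n$-Engel, so the first defining property of $\mathcal{C}$ transfers to $G$ with no extra work.

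The heart of the argument is to show that $w^m(G)$ is locally nilpotent. Any finitely generated subgroup of $w^m(G)$ lies inside $H=\langle v_1,\ldots,v_r\rangle$ for some finite list of $w^m$-values $v_j$ of $G$, so it suffices to prove $H$ nilpotent. Projecting onto each factor yields $H_\lambda=\langle v_1^{(\lambda)},\ldots,v_r^{(\lambda)}\rangle\leq w^m(G_\lambda)$; by the local-nilpotence hypothesis on each $G_\lambda\in\mathcal{C}$, every $H_\lambda$ is nilpotent, and each generator $v_j^{(\lambda)}$ is $n$-Engel in $G_\lambda$ and hence in $H_\lambda$. Thus $H$ embeds diagonally into $\prod_\lambda H_\lambda$, and $H$ is residually nilpotent.

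The main obstacle is that embedding into $\prod_\lambda H_\lambda$ only yields nilpotency of $H$ when the nilpotency classes of the $H_\lambda$ are uniformly bounded in terms of $r$ and $n$ alone. This uniform bound is where Zelmanov's solution of the restricted Burnside problem enters: Zelmanov's theorem that the class of locally nilpotent $n$-Engel groups is a variety gives a function $c_0(r,n)$ bounding the nilpotency class of every $r$-generator $n$-Engel group. The remaining gap -- that a priori only the generators of $H_\lambda$ are known to be $n$-Engel, not every element -- is bridged by a commutator-calculus argument, carried out inside the nilpotent group $H_\lambda$, that propagates the Engel condition from the generators to all of $H_\lambda$ at the cost of replacing $n$ by some $n^{*}=n^{*}(r,n)$. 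Zelmanov then supplies a constant $c=c(r,n)$ with $\gamma_{c+1}(H_\lambda)=1$ uniformly in $\lambda$, from which $\gamma_{c+1}(H)=1$ follows, completing the verification that $w^m(G)$ is locally nilpotent and, therewith, the closure of $\mathcal{C}$ under Cartesian products.
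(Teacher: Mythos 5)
Your reduction via Birkhoff's theorem, the treatment of subgroups, quotients, and of the Engel condition in Cartesian products, and your identification of the real issue --- a bound on the nilpotency class of $H_\lambda=\langle v_1^{(\lambda)},\dots,v_r^{(\lambda)}\rangle$ depending only on $r$, $n$ (and $w$, $m$) --- are all correct. Note, however, that the paper does not prove Theorem~\ref{tota} at all: it quotes it from \cite{var}, and it then \emph{deduces} the uniform class bound (Proposition~\ref{var22}) from Theorem~\ref{tota} by exactly the Cartesian-product argument you are running in reverse. So that uniform bound is not a technical afterthought; it is the entire content of the theorem, and assuming it here is, relative to the paper's logic, circular.

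The genuine gap is your ``bridge''. You propose to propagate the $n$-Engel condition from the generators $v_j^{(\lambda)}$ to all of $H_\lambda$ by ``a commutator-calculus argument carried out inside the nilpotent group $H_\lambda$'', producing an $n^{*}(r,n)$ such that $H_\lambda$ is an $n^{*}$-Engel group, and then to apply the restricted Burnside theorem for locally nilpotent $n^{*}$-Engel groups. No such elementary propagation exists: any commutator computation performed inside the nilpotent group $H_\lambda$ yields constants depending on the nilpotency class of $H_\lambda$, which is precisely the quantity to be bounded. (Compare Lemma~\ref{danilo} of the paper: even for a soluble group generated by $m$ left $n$-Engel elements, the known class bound depends on the derived length $s$, not on $m$ and $n$ alone.) A group generated by finitely many left $n$-Engel elements need not be an $n^{*}$-Engel group for any $n^{*}$ controlled by $r$ and $n$, so the variety statement for locally nilpotent $n$-Engel groups does not apply to $H_\lambda$. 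The bound that your argument needs is obtained in \cite{var} by associating a Lie algebra with $H_\lambda$, verifying that it satisfies a polynomial identity and that commutators in its generators are ad-nilpotent (both extracted from the Engel condition on $w^m$-values), and invoking Zelmanov's Lie-theoretic theorem (Theorem~\ref{zeze} in the paper). Without that input, or an equivalent, the proof does not close.
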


The following proposition is a consequence of Theorem \ref{tota}.

\begin{proposition}\label{var22} Given positive integers $d,m,n$ and a multilinear commutator word $w$, let $G$ be a finite group in which the $w^m$-values are $n$-Engel. Suppose that a subgroup $H$ can be generated by $d$ elements which are $w^m$-values. Then $H$ is nilpotent with $(d,m,n,w)$-bounded class.
\end{proposition}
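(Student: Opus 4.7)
The plan is to deduce the proposition from Theorem \ref{tota} via a relatively free group argument in the variety it produces. Let $\mathcal{V}$ denote the variety of groups in which all $w^m$-values are $n$-Engel and the verbal subgroup $w^m(G)$ is locally nilpotent.

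The first task is to show that our finite group $G$ belongs to $\mathcal{V}$. Being $n$-Engel on $w^m$-values is part of the hypothesis, so only the local nilpotency of $w^m(G)$ needs attention. Here I would invoke Baer's classical theorem asserting that in a finite group the set of left Engel elements coincides with the Fitting subgroup. Since each $n$-Engel element is \emph{a fortiori} a left Engel element, all $w^m$-values of $G$ lie in $F(G)$, and hence $w^m(G) \leq F(G)$ is nilpotent, in particular locally nilpotent. Thus $G \in \mathcal{V}$, and because varieties are closed under subgroups, the subgroup $H$ lies in $\mathcal{V}$ as well.

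The second task is to produce the uniform bound. Write $w = w(x_1,\ldots,x_k)$ and let $\Phi$ be the absolutely free group on free generators $y_{ij}$, for $1 \leq i \leq d$ and $1 \leq j \leq k$. Introduce the $d$ universal $w^m$-values $\phi_i = w(y_{i1}, \ldots, y_{ik})^m$ and pass to the relatively free group $\bar\Phi$ of $\mathcal{V}$ on the same generators. Let $H_0$ be the subgroup of $\bar\Phi$ generated by the images of $\phi_1,\ldots,\phi_d$. Since $\bar\Phi \in \mathcal{V}$, the subgroup $w^m(\bar\Phi)$ is locally nilpotent, and therefore the finitely generated subgroup $H_0 \leq w^m(\bar\Phi)$ is nilpotent of some class $c = c(d,m,n,w)$.

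To finish, write the generators of $H$ as $h_i = w(g_{i1},\ldots,g_{ik})^m$ and consider the assignment $y_{ij} \mapsto g_{ij}$. Since $G \in \mathcal{V}$ satisfies every law of $\mathcal{V}$, this extends to a homomorphism $\bar\Phi \to G$ that sends $H_0$ onto $H$, and hence the nilpotency class of $H$ is at most $c$. The only delicate step is the first one: the verification that $G \in \mathcal{V}$ rests essentially on Baer's theorem on Engel elements of finite groups, and without it the entire reduction to the variety would collapse; the remainder is a standard invocation of the universal property of relatively free groups.
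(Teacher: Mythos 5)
Your proof is correct and rests on the same key input as the paper's: Theorem \ref{tota}, together with Baer's theorem (Lemma \ref{fitting}(1)) to verify that a finite group with $n$-Engel $w^m$-values actually lies in the variety. The only difference is in how the uniform bound is extracted — you read it off the relatively free group of the variety, whereas the paper argues by contradiction via a Cartesian product of a putative sequence of counterexamples with unbounded class; these are two interchangeable standard devices, so the approach is essentially the same.
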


Proposition \ref{var22} can be deduced from Theorem \ref{tota} using standard arguments. Indeed, assume that the proposition is false. Then there is an infinite sequence of finite groups $G_1,G_2,\dots$ with subgroups $H_i\leq G_i$ satisfying the hypotheses of the proposition and having nilpotency class tending to infinity. Each subgroup $H_i$ is generated by $d$ elements which are $w^m$-values, say $a_{i1},\dots,a_{id}$. Now let $G$ be the Cartesian product of the groups $G_i$ and consider the subgroup $H$ of $G$ generated by $d$ elements $a_j=(a_{1j},a_{2j},\dots,a_{ij},\dots)$ with $j=1,\dots,d$. On the one hand, by Theorem \ref{tota} the verbal subgroup $w^m(G)$ is locally nilpotent. On the other hand, we see that $H$ is generated by finitely many $w^m$-values in $G$ and is not nilpotent. This  contradiction completes the proof of Proposition \ref{var22}.

We will also require a non-quantitative version of this result. A proof of the next proposition can be found in \cite{BSTT}.

\begin{proposition}\label{non22} Given positive integers $m,n$ and a multilinear commutator word $w$, let $G$ be a residually finite group in which the $w^m$-values are $n$-Engel. Suppose that a subgroup $H$ can be generated by finitely many Engel elements. Then $H$ is nilpotent.
\end{proposition}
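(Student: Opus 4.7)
Plan: I would mimic the Cartesian product argument that derives Proposition \ref{var22} from Theorem \ref{tota}, adapted to cope with the fact that the generators of $H$ are only Engel elements rather than $w^m$-values. The first step is to show that $w^m(G)$ is locally nilpotent. To see this, embed $G$ into the Cartesian product of its finite quotients $G/N$; in each quotient the $w^m$-values are $n$-Engel and hence, by Baer's theorem on Engel elements in finite groups, lie in the Fitting subgroup, so $w^m(G/N)$ is nilpotent. Since the class appearing in Theorem \ref{tota} is a variety---closed under Cartesian products and subgroups---$G$ itself belongs to this variety, so $w^m(G)$ is locally nilpotent.

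Next, I would argue by contradiction. Suppose there were counterexamples $G_i$ with subgroups $H_i = \langle h_{i,1}, \ldots, h_{i,d}\rangle$ generated by $d$ Engel elements, such that $H_i$ is not nilpotent or has nilpotency class growing without bound. Forming the Cartesian product $G_\infty = \prod_i G_i$, which is residually finite, and the diagonal subgroup $H_\infty = \langle h_1, \ldots, h_d\rangle$ with $h_j = (h_{i,j})_i$, one checks componentwise that the $w^m$-values in $G_\infty$ remain $n$-Engel and that each $h_j$ is an Engel element of $G_\infty$. By the first step, $w^m(G_\infty)$ is locally nilpotent. Moreover, in every finite quotient $H_\infty/M$ the images of the $h_j$ are Engel, hence lie in the Fitting subgroup by Baer's theorem, so $H_\infty/M$ is nilpotent; consequently $H_\infty$ is residually nilpotent.

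The main obstacle is to upgrade this residual nilpotency of $H_\infty$ to genuine nilpotency, since projection back to the $i$th coordinate would then furnish a uniform class bound on the $H_i$ and yield the desired contradiction. I expect this step to require a careful analysis of the interplay between $H_\infty$, its locally nilpotent subgroup $w^m(H_\infty) \leq w^m(G_\infty) \cap H_\infty$, and the quotient $H_\infty/w^m(H_\infty)$, which satisfies the law $w^m \equiv 1$ and is generated by images of Engel elements. This delicate part of the argument presumably draws on the Lie-theoretic machinery of Zelmanov that underlies Theorem \ref{tota}, combined with Wilson--Zelmanov-type results on residually finite Engel groups, to conclude that $H_\infty$ must in fact be nilpotent and hence produce the contradiction.
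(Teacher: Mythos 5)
The paper does not prove this proposition at all: it is quoted from \cite{BSTT}, so there is no internal argument to compare against. Judged on its own, your proposal opens correctly but then breaks down. The first step --- embedding $G$ in the Cartesian product of its finite quotients, using Baer's theorem to see that $w^m(G/N)$ is nilpotent in each finite quotient, and invoking the variety of Theorem \ref{tota} to conclude that $w^m(G)$ is locally nilpotent --- is sound and is indeed the standard opening move.

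The remainder has genuine gaps. First, the statement is not quantitative: a counterexample is a single residually finite $G$ with a single non-nilpotent $H$, so there is no sequence of ``classes tending to infinity'' to feed into a Cartesian-product argument; that device establishes bounded statements such as Proposition \ref{var22}, not plain nilpotency. Second, and more fundamentally, your claim that each diagonal element $h_j=(h_{i,j})_i$ is an Engel element of $G_\infty=\prod_i G_i$ is false: the generators are merely Engel, with no uniform bound on the Engel length, so the least $k(i)$ with $[g_i,{}_{k(i)}h_{i,j}]=1$ may be unbounded in $i$ and the product element then satisfies no Engel condition at all. This is precisely the distinction between $w^m$-values, which are uniformly $n$-Engel and therefore do survive the Cartesian product in the derivation of Proposition \ref{var22}, and arbitrary Engel elements, which do not; the subsequent assertion that the $h_j$ remain Engel in finite quotients of $H_\infty$ inherits the same defect. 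Third, even granting that $H$ itself is residually nilpotent (which does follow directly, since in each finite quotient of $G$ the images of the generators lie in the Fitting subgroup), the passage from ``finitely generated, residually nilpotent, generated by Engel elements'' to ``nilpotent'' is the entire content of the proposition; a finitely generated residually nilpotent group need not be nilpotent. You explicitly leave this step to unspecified Lie-theoretic machinery, whereas it is where all the work lies: one must exploit the local nilpotency of $w^m(G)$ together with the structure of the residually finite quotient $G/w^m(G)$, which satisfies the law $w^m\equiv 1$, and Gruenberg-type results on soluble groups generated by Engel elements. As it stands the proposal is a correct reduction plus an acknowledged gap, not a proof.
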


The proofs of all results mentioned in this section are based on Lie-theoretic techniques created by Zelmanov in his solution of the 
restricted Burnside problem. This is also the case with the proofs of the results obtained in the present article. In particular, the following theorem plays a fundamental role in all our arguments.

\begin{theorem}\label{zeze}
Let $L=\langle a_1,\dots,a_d\rangle$ be a finitely generated Lie algebra satisfying a polynomial identity. Assume that all commutators in the generators $a_1,\dots,a_d$ are ad-nilpotent. Then $L$ is nilpotent.
\end{theorem}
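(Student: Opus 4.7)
This is a fundamental theorem of Zelmanov, forming the Lie-theoretic heart of his solution of the restricted Burnside problem; a complete proof is well beyond the scope of a short sketch, but the strategic shape of the argument splits naturally into two ingredients, one combinatorial and one deeply structural.

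The first ingredient is a Shirshov-type height theorem for PI Lie algebras. From the polynomial identity satisfied by $L$ one extracts a finite set $T=\{v_1,\dots,v_s\}$ of Lie monomials in the generators $a_1,\dots,a_d$, together with a positive integer $h$, such that $L$ is linearly spanned by left-normed products
\[
[v_{i_1}^{k_1},v_{i_2}^{k_2},\dots,v_{i_m}^{k_m}]
\]
of height $m\le h$, for various sequences of indices $i_j$ and exponents $k_j$. The plan is to use this normal form to reduce the whole question to controlling the possible exponents $k_j$ attached to the finitely many monomials $v_i$.

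The second ingredient is the ad-nilpotence hypothesis. Since each $v_i$ is itself a commutator in the generators $a_1,\dots,a_d$, it is ad-nilpotent by assumption, say with $\mathrm{ad}(v_i)^{n_i}=0$. Heuristically this should truncate the exponents $k_j$ that can contribute nonzero vectors in the Shirshov expansion; combined with the bounded height $h$, one would then obtain finite-dimensionality of $L$ and hence nilpotency. I would organise the bookkeeping so that each bound $n_i$ directly restricts the range of exponents attached to $v_i$.

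The main obstacle, and the reason Zelmanov's original proof is so deep, is that the naive Shirshov normal form does not automatically permit this substitution: internal identities in $L$ can produce nonzero products in which some $v_i$ appears with arbitrarily large exponent, even though $v_i$ is ad-nilpotent on cyclic subalgebras. Overcoming this is the role of Zelmanov's theory of sandwich subalgebras (building on Kostrikin's work on the Engel Lie algebra problem), his associated super-algebra construction, and the absorption arguments that locate sufficiently many thin sandwich elements to drive the induction. This machinery is what I expect to be the genuine main obstacle in carrying out the plan, and it is the part of the argument that cannot be shortcut by elementary means.
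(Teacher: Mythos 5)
The paper does not prove this theorem: it is quoted as Zelmanov's result, announced in \cite{zelm} with only a sketch and proved in detail in \cite{ze5}, and it is used throughout as a black box. So there is no in-paper proof to compare your proposal against. Your sketch correctly identifies the two standard ingredients of Zelmanov's argument --- a Shirshov-type height/normal-form theorem for PI Lie algebras, and the exploitation of ad-nilpotence of the commutators in the generators via the sandwich-subalgebra machinery going back to Kostrikin --- and you are right that the genuine difficulty is that ad-nilpotence of the monomials does not naively truncate the exponents appearing in the normal form. What you have written is an accurate roadmap rather than a proof, and you say so yourself; for the purposes of this paper the correct move is exactly what the authors make, namely to cite Zelmanov rather than reprove him. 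No gap beyond the one you openly acknowledge.
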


Theorem \ref{zeze} represents the most general form of the Lie-theoretic part of the solution of the restricted Burnside problem. It was announced in \cite{zelm} with only a sketch of proof. A detailed proof was recently published in \cite{ze5}.

\section{Preliminaries}

Throughout the article we denote by $G'$ the commutator subgroup of a group $G$ and by $\langle M\rangle$ the subgroup generated by a subset $M\subseteq G$. As usual, $C_G(M)$ denotes the centralizer of a subset $M$ in $G$. The preparatory work for the proof of our main theorems requires some preliminary results. We start this section with some well-known facts concerning Engel elements in groups.

\begin{lemma}\label{fitting} Let $G$ be a group generated by (left) Engel elements.
\begin{enumerate}
\item If $G$ is finite, then it is nilpotent.
\item If $G$ is soluble, then it is locally nilpotent.
\end{enumerate}
\end{lemma}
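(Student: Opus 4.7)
My plan is to identify each part of the lemma with a classical theorem on Engel elements: part (1) with Baer's theorem, which asserts that every Engel element of a finite group lies in the Fitting subgroup $F(G)$, and part (2) with Gruenberg's theorem, which asserts that every Engel element of a soluble group lies in the Hirsch--Plotkin radical. Once these are in hand, the lemma is essentially immediate: under the hypothesis that $G$ is generated by Engel elements, $G$ coincides with $F(G)$ in case (1) and with its Hirsch--Plotkin radical in case (2), and these subgroups are nilpotent and locally nilpotent respectively by definition. Thus the real content of the lemma is the structural statement about individual Engel elements, and the bulk of the work is to recall or reprove Baer's and Gruenberg's theorems.

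For part (1), I would outline a short proof of Baer's theorem by induction on $|G|$. Fix an Engel element $x$, pass to $G/N$ for a minimal normal subgroup $N$, and use induction to assume $x$ has image in $F(G/N)$. If $N$ is a direct product of nonabelian simple groups, the Engel condition forces $x$ to centralise $N$, because no nontrivial automorphism of a nonabelian chief factor of a finite group can be Engel. If $N$ is elementary abelian, the Engel condition translates to $x$ acting unipotently on $N$, so $\langle x\rangle N$ is nilpotent. Combining these observations with the characterisation of $F(G)$ as the intersection of the centralisers of all chief factors of $G$ yields $x\in F(G)$, from which part (1) follows at once.

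For part (2), I would reduce to the metabelian situation and invoke the classical Gruenberg argument there. Given a finitely generated subgroup $H=\langle x_1,\ldots,x_r\rangle$ of $G$ with each $x_i$ Engel in $G$, induction on the derived length of $H$ reduces the problem to the case where $H$ is metabelian. One then views the abelian normal subgroup $H'$ as a module over the commutative Noetherian ring $\mathbb{Z}[H/H']$; the Engel identities satisfied by the generators $x_i$ on $H'$ translate to nilpotency of the action of the augmentation ideal, which forces $H$ itself to be nilpotent and completes the induction. The main obstacle in a direct proof is precisely this metabelian step, since it requires nontrivial commutative-algebra input about integral group rings of finitely generated abelian groups. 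As the lemma is merely invoked as a well-known fact, the cleanest course is simply to cite Baer's and Gruenberg's theorems from a standard reference such as \cite{Rob} rather than reconstruct their proofs in full.
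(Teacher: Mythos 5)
Your proposal is correct and matches the paper, which proves this lemma simply by citing Baer's theorem (for the finite case) and Gruenberg's theorem (for the soluble case), exactly as you ultimately recommend. The additional proof sketches you give for those two classical results are reasonable but not part of the paper's argument.
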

\begin{proof} The first statement is well-known Baer's theorem (see \cite[Satz~III.6.15]{hup}). The second statement is due to Gruenberg \cite{gruen}.
\end{proof}

An element $g\in G$ is a right Engel element if for each $x\in G$ there exists a positive integer $n$ such that $[g,{}_nx]=1$. If $n$ can be chosen independently of $x$, then $g$ is a right $n$-Engel element. The next observation is due to Heineken (see \cite[12.3.1]{Rob}).

\begin{lemma}\label{heineken} Let $g$ be a right $n$-Engel element in a group $G$. Then $g^{-1}$ is a left $(n+1)$-Engel element.
\end{lemma}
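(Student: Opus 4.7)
My plan is to derive the left $(n+1)$-Engel condition on $g^{-1}$ from the right $n$-Engel condition on $g$ by a direct commutator manipulation. The pivotal identity is
\[[y, g^{-1}] = [g, y]^{g^{-1}},\]
which follows in one line from $[a, b^{-1}] = [a, b]^{-b^{-1}}$ together with $[y, g]^{-1} = [g, y]$; this converts a commutator with $g^{-1}$ on the right into a conjugate of a commutator with $g$ on the left, and is the bridge between the two Engel conditions.

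Proceeding by induction on $n$, the base case $n = 1$ is immediate: a right $1$-Engel element $g$ is central, so $g^{-1}$ is central and thus left $2$-Engel. For the inductive step, I would iterate the pivotal identity together with the standard rewrites $[u^v, w] = [u, w^{v^{-1}}]^v$ and $[u, v^{-1}] = [u, v]^{-v^{-1}}$ to express $[y, {}_{n+1} g^{-1}]$, up to conjugation by a power of $g$, as an iterated commutator with $g$ on the leftmost slot. To force this expression to be trivial I would substitute $x = gy$ in the right Engel identity $[g, {}_n x] = 1$: since a direct calculation shows $[g, gy] = [g, y]$, we obtain
\[[[g, y], {}_{n-1} (gy)] = 1.\]
Expanding each outer commutator with $gy$ via $[a, vw] = [a, w][a, v]^w$ and using the inductive form of the Engel condition to discharge the ``$y$-commutator'' pieces that arise, one collects the remaining $g$-commutator pieces into precisely the expression for $[y, {}_{n+1} g^{-1}]$ (up to conjugation), yielding its vanishing.

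The main obstacle is the combinatorial bookkeeping: the bilinear commutator identities distribute each outer bracket into mixed products of commutators with $y$ and with $g^{-1}$, and one must track conjugations and signs carefully to identify the resulting expression as a product of right $n$-Engel values $[g, {}_n z]$ for appropriate $z$. An alternative, perhaps cleaner, route is to apply the Hall-Witt identity
\[[a, b^{-1}, c]^b \,[b, c^{-1}, a]^c \,[c, a^{-1}, b]^a = 1\]
iteratively with $a = y$ and $b = c = g$, exploiting the triviality of $[g, g^{-1}]$ to migrate inverses through nested commutators; this yields essentially the same reduction in a more symmetric form.
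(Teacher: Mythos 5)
The paper does not actually prove this lemma---it quotes it from Robinson [12.3.1], where it appears with Heineken's argument---so your proposal has to stand on its own, and as written it does not. Your pivotal identity $[y,g^{-1}]=[g,y]^{g^{-1}}$ is correct and is genuinely the right opening move, but everything after it is a plan whose key step fails. The induction on $n$ has no usable hypothesis: a right $n$-Engel element need not be right $(n-1)$-Engel, and the element $[g,y]$ appearing in $[[g,y],{}_{n-1}(gy)]=1$ is not known to be a right Engel element of any degree, since that identity constrains it only against the single element $gy$ rather than against all of $G$. So there is no ``inductive form of the Engel condition'' available to ``discharge the $y$-commutator pieces.'' Moreover the expansion $[a,vw]=[a,w][a,v]^{w}$ is not bilinear in a group: iterating it on $[[g,y],{}_{n-1}(gy)]$ produces nested mixed commutators in $y$ and $g$ that do not separate into a ``$y$-part'' and a ``$g$-part,'' and you give no reason why the unwanted terms vanish or why the surviving ones assemble into $[y,{}_{n+1}g^{-1}]$. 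The Hall--Witt variant is likewise only a gesture.

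What is missing is small but essential, and it renders the substitution $x=gy$ and the induction on the Engel degree unnecessary. From your identity and $[u^{g^{-1}},g^{-1}]=[u,g^{-1}]^{g^{-1}}$ one gets, by iterating,
\[
[y,{}_{n+1}g^{-1}]=[[g,y],{}_{n}g^{-1}]^{g^{-1}}.
\]
Now write $[g,y]=g^{-1}g^{y}$ and apply $[uv,w]=[u,w]^{v}[v,w]$ with $u=w=g^{-1}$ and $v=g^{y}$ to obtain $[[g,y],g^{-1}]=[g^{y},g^{-1}]$, whence $[[g,y],{}_{n}g^{-1}]=[g^{y},{}_{n}g^{-1}]$. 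The element $g^{y}$ is a conjugate of $g$ and hence is itself a right $n$-Engel element (because $[g^{y},{}_{n}x]=[g,{}_{n}x^{y^{-1}}]^{y}=1$ for all $x$), so evaluating its defining condition at the single element $x=g^{-1}$ gives $[g^{y},{}_{n}g^{-1}]=1$ and therefore $[y,{}_{n+1}g^{-1}]=1$. The two points your outline misses are that the leading slot must carry the conjugate $g^{y}$ rather than $g$ itself, and that the right Engel hypothesis is then used exactly once, at the substitution $x=g^{-1}$.
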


 We will also require the following lemmas.

\begin{lemma}[{\cite[Lemma 2.2]{fernandez-morigi}}] \label{inverse} Let $w$ be a multilinear commutator word. Then $G_w$ is symmetric, that is, 
$x\in G_w$
implies that $x^{-1}\in G_w$.
\end{lemma}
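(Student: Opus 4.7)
The plan is to induct on the depth of the commutator tree defining the multilinear commutator word $w$. In the base case $w=[x_1,x_2]$, the identity $[g_1,g_2]^{-1}=[g_2,g_1]=w(g_2,g_1)$ shows at once that the inverse of any $w$-value is a $w$-value. For the inductive step, write
$$w(x_1,\ldots,x_k)=\bigl[u(x_1,\ldots,x_i),\,v(x_{i+1},\ldots,x_k)\bigr],$$
where $u$ and $v$ are multilinear commutator words (or single variables, in which case symmetry of their value sets is trivial) on disjoint sets of variables of strictly smaller depth, so the inductive hypothesis applies to each factor.

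Given an arbitrary $w$-value $g=[u(\bar a),v(\bar b)]$, with $\bar a=(a_1,\ldots,a_i)$ and $\bar b=(b_{i+1},\ldots,b_k)$, I apply the commutator identity $[x,y]^{-1}=[x^{-1},y]^{x}$ to get
$$g^{-1}=\bigl[u(\bar a)^{-1},\,v(\bar b)\bigr]^{u(\bar a)}.$$
By the inductive hypothesis applied to the shorter word $u$, the element $u(\bar a)^{-1}$ is itself a $u$-value; write $u(\bar a)^{-1}=u(\bar a')$ for a suitable tuple $\bar a'$. Since $u(\bar a)$ and $u(\bar a')$ are inverses of one another, they commute, so $u(\bar a')^{u(\bar a)}=u(\bar a')$. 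Using $[x,y]^z=[x^z,y^z]$ this yields
$$g^{-1}=\bigl[u(\bar a'),\,v(\bar b)^{u(\bar a)}\bigr].$$

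Finally, because $v$ is a formal word and conjugation is an automorphism, we have $v(\bar b)^{u(\bar a)}=v(\bar b^{\,u(\bar a)})$, where $\bar b^{\,u(\bar a)}$ denotes coordinate-wise conjugation. Setting $\bar b'=\bar b^{\,u(\bar a)}$, we conclude
$$g^{-1}=\bigl[u(\bar a'),\,v(\bar b')\bigr]=w(\bar a',\bar b')\in G_w,$$
as required. There is no serious obstacle here: the argument is essentially bookkeeping, and the one nontrivial ingredient is the identity $[x,y]^{-1}=[x^{-1},y]^{x}$, which neatly pushes the inversion entirely into the first factor (where induction handles it) while acting on the second factor only by an inner automorphism, which preserves the multilinear-commutator structure of $v$.
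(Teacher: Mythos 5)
Your proof is correct: the structural induction on $w=[u,v]$, using $[x,y]^{-1}=[x^{-1},y]^{x}$ together with the facts that $u(\bar a)^{-1}$ commutes with $u(\bar a)$ and that the set of $v$-values is closed under conjugation, is exactly the standard argument. The paper itself gives no proof (it cites \cite[Lemma 2.2]{fernandez-morigi}), and the proof there proceeds by essentially the same induction, so there is nothing to add.
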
 

\begin{lemma}\label{zero} Let $w$ be a word and $G$ a group such that the set of $w$-values in $G$ is finite with at most $m$ elements. Then the order of the commutator subgroup $w(G)'$ is $m$-bounded.
\end{lemma}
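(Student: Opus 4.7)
The plan is to exploit the fact that $G_w$ is closed under $G$-conjugation and apply a classical centralizer argument culminating in Schur's theorem. First I would observe that for any word $w$ one has the identity $w(g_1,\dots,g_k)^h=w(g_1^h,\dots,g_k^h)$, so $G_w$ is a normal (conjugation-invariant) subset of $G$. Since $|G_w|\le m$, for every $x\in G_w$ the conjugacy class $x^G$ lies inside $G_w$ and has at most $m$ elements; hence $[G:C_G(x)]\le m$ and, a fortiori, $[w(G):C_{w(G)}(x)]\le m$.

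Next I would set $Z=\bigcap_{x\in G_w}C_{w(G)}(x)$. This is the intersection of at most $m$ subgroups of $w(G)$, each of index at most $m$, so $[w(G):Z]\le m^m$. Because $G_w$ generates $w(G)$ by definition of the verbal subgroup, any element centralizing $G_w$ centralizes all of $w(G)$, so $Z\le Z(w(G))$. Consequently $[w(G):Z(w(G))]\le m^m$, an $m$-bounded number.

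Finally I would invoke the quantitative form of Schur's theorem: if $[H:Z(H)]$ is finite, then $|H'|$ is bounded by a function of that index. Applying this to $H=w(G)$ delivers the required $m$-bound on $|w(G)'|$.

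No step here is truly an obstacle; the argument is a standard application of B.H.\ Neumann / Schur-type reasoning. The only point to be careful about is that we do \emph{not} need $G_w$ to be symmetric (Lemma~\ref{inverse} is stated for multilinear commutators, but the present lemma is about an arbitrary word $w$); conjugation-invariance alone is enough to control the centralizer indices, and the fact that $G_w$ generates $w(G)$ as a subgroup is what lets us deduce $Z\le Z(w(G))$ without any additional hypothesis on $w$.
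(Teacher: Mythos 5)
Your proof is correct and follows essentially the same route as the paper: both arguments show that $[w(G):Z(w(G))]$ is $m$-bounded (the paper gets $m!$ by embedding $G/C_G(w(G))$ into the symmetric group on $G_w$, you get $m^m$ by intersecting the $\le m$ centralizers $C_{w(G)}(x)$, each of index $\le m$) and then conclude with the quantitative form of Schur's theorem. The two bounds differ only cosmetically, so this is the same proof in substance.
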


\begin{proof} The group $G$ acts on the set of $w$-values by conjugation and therefore $G/C_G(w(G))$ embeds in the symmetric group on $m$ symbols. It follows that the order of $w(G)/Z(w(G))$ is at most $m!$ and the result follows from Shur's Theorem (see \cite[p. 102]{Rob2}). 
\end{proof}

\begin{lemma}\label{normal closure}
Let $G=\langle g,t\rangle$ be a group such that $[g,{}_nt]=1$. Then the normal closure of the subgroup $\langle g\rangle$ in the group $G$ is generated by the set $\{g^{t^i}|t=0,\dots, n-1\}$.
\end{lemma}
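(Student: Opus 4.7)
The plan is to let $H=\langle g^{t^i}\mid 0\le i\le n-1\rangle$ and show that $H$ coincides with $\langle g\rangle^G$. The inclusion $H\subseteq \langle g\rangle^G$ is immediate. For the reverse inclusion I would prove that $H$ is normal in $G=\langle g,t\rangle$ and contains $g=g^{t^0}$, so it must contain the normal closure. Since $g\in H$, the subgroup is automatically normalised by $g$, so the whole problem reduces to showing $H^t=H$, that is, that both $t$ and $t^{-1}$ map each generator into $H$.

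Among the $2n$ conjugates $(g^{t^i})^{t^{\pm 1}}$ with $0\le i\le n-1$, all but two are of the form $g^{t^j}$ with $0\le j\le n-1$, and hence lie in $H$ by definition. The two genuine cases are $g^{t^n}$ and $g^{t^{-1}}$; this is where the hypothesis $[g,{}_n t]=1$ must be used.

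To deal with them, I would introduce the iterated commutators $c_j=[g,{}_j t]$, with $c_0=g$, $c_{j+1}=c_j^{-1}c_j^t$, and $c_n=1$. The defining recursion rewrites as $c_j^t=c_j c_{j+1}$. Two parallel inductions then do the work. First, by induction on $k$, $g^{t^k}$ is a word in $c_0,\ldots,c_k$: the inductive step replaces each $c_j$ in the expression by $c_j c_{j+1}$. Applied with $k=n$ and using $c_n=1$, this puts $g^{t^n}$ in $\langle c_0,\ldots,c_{n-1}\rangle$. Second, by induction on $j$, each $c_j$ is a word in $g,g^t,\ldots,g^{t^j}$, using $c_{j+1}=c_j^{-1}c_j^t$. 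Combining the two, $g^{t^n}\in\langle c_0,\ldots,c_{n-1}\rangle\subseteq H$.

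For $g^{t^{-1}}$ I would reverse the direction: the relation $c_j^t=c_j c_{j+1}$ rearranges to $c_j^{t^{-1}}=c_j\,(c_{j+1}^{t^{-1}})^{-1}$, and descending induction from $j=n$ (where $c_n^{t^{-1}}=1$) produces $c_j^{t^{-1}}$ as a word in $c_j,c_{j+1},\ldots,c_{n-1}$. Taking $j=0$ gives $g^{t^{-1}}=c_0^{t^{-1}}\in H$. The main obstacle is really just careful bookkeeping in these two inductions: one must make sure the substitutions terminate (which they do because $c_n=1$) and that the intermediate expressions are genuinely in the subgroup generated by $c_0,\ldots,c_{n-1}$, rather than involving $c_j$'s of higher index. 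Once this is in place, $H$ is normalised by $t$, hence normal in $G$, and the lemma follows.
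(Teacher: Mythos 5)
Your argument is correct and is essentially the paper's proof written out in full: your two parallel inductions establish exactly the identity $\langle g, g^t,\dots ,g^{t^j}\rangle=\langle g,[g,t],\dots,[g,{}_jt]\rangle$ that the paper's one-line proof invokes, with $[g,{}_nt]=1$ then forcing the chain of subgroups to stabilise. The only addition is your explicit handling of conjugation by $t^{-1}$ via $c_j^{t^{-1}}=c_j\,(c_{j+1}^{t^{-1}})^{-1}$, a detail the paper leaves implicit but which is indeed needed to conclude normality.
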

\begin{proof} It is enough to note that $\langle g, g^t,\dots ,g^{t^j}\rangle=\langle g,[g,t],\dots,[g,{}_jt]\rangle$ for each natural number $j$.
\end{proof}
The following lemma is taken from \cite{danilo}.
\begin{lemma}\label{danilo} Let G be a group generated by $m$ elements which are $n$-Engel. Suppose that $G$ is soluble with derived length $s$. Then $G$ is nilpotent with $(m,n,s)$-bounded class.
\end{lemma}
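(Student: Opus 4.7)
The plan is to argue via a universal (relatively free) object. Fix $m$, $n$, $s$ and let $F = F(x_1,\dots,x_m)$ be the free group of rank $m$; let $R \trianglelefteq F$ be the normal closure of $F^{(s)}$ together with the set $\{[w,{}_n x_i] : w \in F,\ 1 \le i \le m\}$. Put $U = F/R$. By construction $U$ is soluble of derived length at most $s$, finitely generated by the images $\bar x_1,\dots,\bar x_m$, and each $\bar x_i$ is $n$-Engel in $U$: indeed, any $g \in U$ is the image of some $w \in F$, and $[w,{}_n x_i] \in R$ forces $[g,{}_n \bar x_i] = 1$.

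Next, I would observe that $U$ is universal for the hypothesis of the lemma. If $G$ is any group satisfying that hypothesis, with $n$-Engel generators $a_1,\dots,a_m$, then the homomorphism $F \to G$ sending $x_i \mapsto a_i$ kills $F^{(s)}$ (since $G^{(s)}=1$) and each commutator $[w,{}_n x_i]$ (since $a_i$ is $n$-Engel in $G$), and therefore factors through $U$. Consequently every group $G$ as in the lemma is a quotient of $U$.

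To bound the class of $G$ in terms of $m,n,s$ it therefore suffices to prove that the single group $U$ is itself nilpotent: its class $c = c(m,n,s)$ then depends only on the construction of $U$, and every such $G$ inherits the class bound $c$. Here I would invoke Gruenberg's theorem in the form of Lemma~\ref{fitting}(2): since $U$ is soluble and generated by Engel elements it is locally nilpotent, and being finitely generated (by the $m$ images $\bar x_i$) it is therefore nilpotent.

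The only nontrivial step---the sole obstacle---is verifying that the universal object $U$ is actually nilpotent; once this is secured via Gruenberg's theorem combined with the finite-generation of $U$, the $(m,n,s)$-bounded class for an arbitrary $G$ follows immediately from the universal property.
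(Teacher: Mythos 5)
Your argument is correct. Note that the paper does not prove this lemma at all --- it is imported verbatim from \cite{danilo} --- so there is no in-text proof to match against; but your universal-object argument is sound and self-contained, and all three of its ingredients check out. First, $U=F/R$ is soluble of derived length at most $s$ and each $\bar x_i$ is a left $n$-Engel element of $U$, since every element of $U$ lifts to some $w\in F$ and $[w,{}_n x_i]\in R$. Second, any $G$ as in the statement is a quotient of $U$, because the evaluation map $x_i\mapsto a_i$ kills $F^{(s)}$ and all the Engel relators. Third, $U$ is locally nilpotent by Lemma \ref{fitting}(2), hence nilpotent by finite generation, and its class depends only on $m,n,s$ because $U$ does; quotients inherit the class bound. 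This is the same compactness-type mechanism the paper uses elsewhere (e.g.\ in deducing Proposition \ref{var22} from Theorem \ref{tota} via a Cartesian product of a putative sequence of counterexamples), only packaged positively through a relatively free group rather than by contradiction, and with Gruenberg's theorem playing the role that local nilpotency of the verbal subgroup plays there. The one caveat worth stating explicitly is that the bound you obtain is non-effective --- the class of $U$ is finite but not computed --- which is nevertheless exactly what the paper's convention for ``$(m,n,s)$-bounded'' requires.
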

\begin{lemma}\label{abel}
Let $G$ be a group. Assume that $A$ is a normal abelian subgroup of $G$ and let $t\in G$. Then $[ab,{}_nt]=[a,{}_nt][b,{}_nt]$ for every $a,b\in A$ and for every $n\ge 1$.
\end{lemma}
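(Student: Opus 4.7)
The plan is to exploit the fact that since $A$ is normal in $G$, conjugation by $t$ restricts to an automorphism of $A$, and hence the commutator map $\phi\colon A\to A$ defined by $\phi(a)=[a,t]=a^{-1}a^t$ takes values in $A$. Because $A$ is abelian, this $\phi$ is in fact an endomorphism of $A$: for any $a,b\in A$ one has $[ab,t]=b^{-1}a^{-1}a^tb^t$, and since all four factors lie in the abelian group $A$ one may commute them to obtain $a^{-1}a^t\cdot b^{-1}b^t=[a,t][b,t]$. This settles the case $n=1$ and also shows that $[a,t]\in A$ whenever $a\in A$.

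From there I would proceed by induction on $n$. The case $n=1$ is the computation above. Assuming the identity for some $n\ge 1$, observe that $[a,{}_nt]$ and $[b,{}_nt]$ both lie in $A$ (by iterating the previous observation $n$ times), so $[ab,{}_{n+1}t]=\bigl[[ab,{}_nt],t\bigr]=\bigl[[a,{}_nt][b,{}_nt],t\bigr]$, and applying the base case to the two elements $[a,{}_nt],[b,{}_nt]\in A$ yields $[a,{}_{n+1}t][b,{}_{n+1}t]$, as required.

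There is no real obstacle here: the only thing one needs to check carefully is that the commutator $[a,t]$ genuinely lies in $A$, which is immediate from normality, and that the rearrangement $b^{-1}a^{-1}a^tb^t=a^{-1}a^tb^{-1}b^t$ is justified by the abelianness of $A$. The lemma will be invoked later to linearize Engel-type expressions on normal abelian sections, so the content one really needs is precisely that $a\mapsto[a,{}_nt]$ is additive on such sections.
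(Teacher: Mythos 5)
Your proof is correct and follows essentially the same route as the paper's: an induction on $n$ whose base case is the identity $[xy,z]=[x,z]^y[y,z]$ specialized to a normal abelian subgroup (your explicit rearrangement $b^{-1}a^{-1}a^tb^t=a^{-1}a^tb^{-1}b^t$ is just this identity unpacked). The observation that $[a,{}_nt]$ stays in $A$, which licenses the inductive step, is the only point needing care and you handle it correctly.
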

\begin{proof} This is an easy induction on $n$, using the well-known commutator identity $[xy,z]=[x,z]^y[y,z]$.
\end{proof}

For a subgroup $A$ of a group $G$, an element  $x \in G$ and a positive  integer $n$, we write $[A,{}_n x]$ to denote the subgroup generated by all elements $[a,{}_n x]$, with $a\in A$. The following lemma is due to Casolo.

\begin{lemma}[{\cite[Lemma 6]{Casolo}}] \label{casolo}  Let $A$ be an abelian group, and let $x$ be an automorphism of $A$ such that $[A,{}_nx]=1$ for some $n\ge1$. If $x$ has finite order $q$, then $[A^{q^{n-1}},x]=1$.
\end{lemma}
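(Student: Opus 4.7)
The plan is to recast the statement in operator (additive) language and extract it from a single binomial expansion. Writing $A$ additively and setting $\phi = x - 1$ as an operator on $A$, one has $[a, x] = a^{x} - a = \phi(a)$ and hence $[a,{}_n x] = \phi^n(a)$. So the hypothesis $[A,{}_n x] = 1$ reads $\phi^n = 0$ on $A$, the hypothesis $x^q = 1$ reads $(1+\phi)^q = 1$ on $A$, and the goal $[A^{q^{n-1}}, x] = 1$ amounts to $q^{n-1}\phi \cdot A = 0$, since in additive notation $A^{q^{n-1}} = q^{n-1}A$.

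Expanding $(1+\phi)^q - 1 = 0$ by the binomial theorem and discarding the monomials $\phi^i$ with $i \ge n$ (which already vanish on $A$) produces the master relation
\begin{equation*}
q\phi + \binom{q}{2}\phi^2 + \cdots + \binom{q}{n-1}\phi^{n-1} = 0 \quad \text{on } A.
\end{equation*}

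The heart of the argument is then a short induction on $k = 1, 2, \ldots, n-1$ establishing the claim $q^k \phi^{n-k} = 0$ on $A$. For $k=1$, multiplying the master relation by $\phi^{n-2}$ kills every term but the first and yields $q\phi^{n-1} = 0$. For the inductive step, multiplying the master relation by $\phi^{n-k-1}$ gives
\begin{equation*}
q\phi^{n-k} + \binom{q}{2}\phi^{n-k+1} + \cdots + \binom{q}{k}\phi^{n-1} = 0,
\end{equation*}
and then multiplying through by $q^{k-1}$ annihilates each non-leading term via the earlier instance $q^{k-i+1}\phi^{n-k+i-1} = 0$ of the inductive hypothesis, applied for $i = 2, \ldots, k$ (note $k - 1 \ge k - i + 1$, so the factor $q^{k-1}$ really does absorb through that instance). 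What remains is $q^k \phi^{n-k} = 0$. Taking $k = n-1$ delivers $q^{n-1}\phi = 0$ on $A$, which is the desired conclusion.

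I do not anticipate a real obstacle here: once the problem is rephrased as a polynomial identity in the single operator $\phi$, satisfying $\phi^n = 0$ together with $(1+\phi)^q = 1$, the induction is essentially forced by iterating the master relation. The only point that genuinely demands care is the exponent/coefficient bookkeeping in the inductive step, namely verifying that for each $i \in \{2,\ldots,k\}$ the inductive hypothesis does provide enough factors of $q$ to kill $q^{k-1}\binom{q}{i}\phi^{n-k+i-1}$.
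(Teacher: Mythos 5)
Your proof is correct. Note first that the paper does not actually prove this lemma: it is imported verbatim from Casolo's paper (his Lemma 6), so there is no internal argument to compare against, and a self-contained proof is genuinely useful. Your operator-theoretic rendering works: since $A$ is abelian, $\mathrm{End}(A)$ is a ring in which $1$ and $x$ generate a commutative subring, so setting $\phi=x-1$ the binomial theorem applies, the hypotheses become $\phi^n=0$ and $(1+\phi)^q=1$, and the goal becomes $q^{n-1}\phi=0$. The master relation $q\phi+\binom{q}{2}\phi^2+\cdots+\binom{q}{n-1}\phi^{n-1}=0$ and the induction $q^k\phi^{n-k}=0$ are sound; I checked the bookkeeping in the inductive step: after multiplying by $\phi^{n-k-1}$ and then by $q^{k-1}$, the term $q^{k-1}\binom{q}{i}\phi^{n-k-1+i}$ for $i\in\{2,\dots,k\}$ is annihilated by the earlier instance $q^{\,k+1-i}\phi^{\,n-(k+1-i)}=0$, since $k-1\ge k+1-i$ exactly when $i\ge 2$. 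Two harmless points you leave implicit: the case $n=1$ is degenerate (the conclusion is then literally the hypothesis $[A,x]=1$, and the induction is vacuous), and for $q<n$ some binomial coefficients in the master relation vanish, which affects nothing. Neither is a gap.
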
 

\begin{lemma}\label{criterion for nilpotency} Let $G=U\langle t\rangle$ be a group that is a product of a normal subgroup $U$ and a cyclic subgroup $\langle t\rangle$. Assume that  $U$ is nilpotent of class $c$ and there exists a generating set $A$ of $U$ such that $[a,{}_nt]=1$ for every $a\in A$. Then $G$ is nilpotent of $(c,n)$-bounded class.
\end{lemma}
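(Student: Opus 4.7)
I would proceed by induction on the nilpotency class $c$ of $U$, aiming for a bound of the form $G$ has class at most $\kappa(c,n)$. For the base case $c=1$ the subgroup $U$ is abelian, and iterating Lemma \ref{abel} (together with $[a^{-1},{}_nt]=[a,{}_nt]^{-1}=1$) extends $[a,{}_nt]=1$ from $A$ to every element of $U$. The chain $U_i:=[U,{}_it]$ is then a descending series of normal subgroups of $G$ with $U_n=1$, and each quotient $U_{i-1}/U_i$ is central in $G/U_i$ (centralized by $U$ because $U$ is abelian, and by $t$ by construction). Since $G/U=\langle tU\rangle$ is cyclic, this is a central series of length $n+1$, so $G$ has class at most $n+1$.

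For the inductive step with $c\ge 2$, put $V=\gamma_c(U)$; this is abelian, central in $U$, and normal in $G$, and $G/V=(U/V)\langle tV\rangle$ meets the hypothesis with $U/V$ of class $c-1$. By induction $\gamma_{k+1}(G)\subseteq V$ with $k=\kappa(c-1,n)$. The remaining task is to prove that $[V,{}_mt]=1$ for $m=c(n-1)+1$, because $V\subseteq Z(U)$ and $G=U\langle t\rangle$ together give $[V,G]=[V,t]$ and hence $[V,{}_mG]=[V,{}_mt]=1$, yielding $\gamma_{k+m+1}(G)\subseteq[V,{}_mG]=1$.

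To establish the claim, Lemma \ref{abel} lets me check $[v,{}_mt]=1$ only on a set of generators of the abelian group $V$. Because $V\subseteq Z(U)$ and $\gamma_{c+1}(U)=1$, the $c$-fold commutator map $(u_1,\dots,u_c)\mapsto[u_1,\dots,u_c]$ is exactly multilinear into $V$, so $V$ is generated as an abelian group by elements $v=[a_1,\dots,a_c]$ with $a_i\in A$. Writing $a_i^t=a_i[a_i,t]$ and expanding $v^t=[a_1^t,\dots,a_c^t]$ by this multilinearity yields $[v,t]=v^{-1}v^t$ as a product, over nonempty $I\subseteq\{1,\dots,c\}$, of commutators whose slot $i$ holds $[a_i,t]$ if $i\in I$ and $a_i$ otherwise. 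Iterating $m$ times inside the abelian group $V$, with multilinearity distributing the operation at each step, $[v,{}_mt]$ becomes a product of commutators of the form $[[a_1,{}_{k_1}t],\dots,[a_c,{}_{k_c}t]]$ with $\sum_i k_i\ge m$. For $m=c(n-1)+1$ the pigeonhole principle forces some $k_i\ge n$, whence $[a_i,{}_{k_i}t]=1$ and the whole commutator vanishes.

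The main obstacle is the exact multilinear expansion in the last step: it works cleanly only because $V=\gamma_c(U)$ is central in $U$, so the standard commutator identities involving elements of $V$ hold with no higher-commutator correction terms. Once that is in place, the pigeonhole argument mirrors the Lie-theoretic observation that a bracket of elements ad-nilpotent of index $n$ in a Lie ring of class $c$ is itself ad-nilpotent of index at most $c(n-1)+1$.
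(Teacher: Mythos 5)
Your argument is correct, and your base case ($U$ abelian) is exactly the paper's: Lemma \ref{abel} extends $[a,{}_nt]=1$ from $A$ to all of $U$, and the chain $[U,{}_it]$ is a central series (in fact, since both $U$ and $\langle t\rangle$ are abelian here one even has $\gamma_{i+1}(G)=[U,{}_it]$, so the class is at most $n$ rather than your $n+1$ --- immaterial for the statement). Where you diverge is the step from abelian to class $c$: the paper disposes of it in one line by observing that $G/U'$ is nilpotent of class at most $n$ (the abelian case applied to $U/U'$) and then quoting P.~Hall's nilpotency criterion \cite[5.2.10]{Rob} for the normal subgroup $U$ of class $c$. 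You instead run an induction on $c$, peel off $V=\gamma_c(U)$, and prove directly that $[V,{}_{c(n-1)+1}t]=1$ by expanding $c$-fold commutators multilinearly (legitimate since $\gamma_{c+1}(U)=1$ makes the commutator map into $V$ exactly multilinear) and applying pigeonhole to the exponents $k_i$ in $[[a_1,{}_{k_1}t],\dots,[a_c,{}_{k_c}t]]$. Both routes give a $(c,n)$-bounded class of comparable (quadratic in $c$) size. The paper's proof is shorter and hides all the bookkeeping inside a citable criterion; yours is self-contained, avoids Hall's criterion entirely, and makes transparent the Lie-theoretic analogue (a bracket of ad-nilpotent elements of index $n$ in a class-$c$ Lie ring is ad-nilpotent of index at most $c(n-1)+1$) that genuinely underlies the result. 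The only place requiring care in your version is the exact multilinearity used in the iterated expansion, and you correctly identify centrality of $V$ in $U$ as the reason no correction terms appear.
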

\begin{proof} The proof is by induction on the nilpotency class of $U$. If $U$ is abelian, we have $[uv,{}_nt]=[u,{}_nt][v,{}_nt]$ for each $u,v\in U$. Since $U=\langle A\rangle$, we conclude that $[U,{}_nt]=1$ and so $G$ is nilpotent with class at most $n$.

If $U$ is not abelian, note that $G/U'$ is nilpotent with class at most $n$.
Taking into account that $U$ is nilpotent of class $c$, the result follows from P. Hall's criterion for nilpotency \cite[5.2.10]{Rob}. 
\end{proof}

\section{Theorem \ref{ra}}

Let $F$ be the free group, and let $F_i$ denote the $i$th term of the lower central series of $F$. We say that a word $w$ has degree $j$ if $w\in F_j$ and $w\not\in F_{j+1}$.

\begin{lemma}\label{Pavel} Let $p$ be a prime and let $w=w(x_1,x_2,\dots,x_k)$ be a word of degree $j$. Let $G=\langle a_1,a_2,\dots,a_k\rangle$ be a nilpotent group of class $c$ generated by $k$ elements $a_1,a_2,\dots,a_k$. Denote by $X$ the set of all conjugates in $G$ of elements of the form $w(a_1^i,a_2^i,\dots,a_k^i)$, where $i$ is an integer not divisible by $p$, and assume that $|X|\leq m$ for some integer $m$. Then $|\langle X\rangle|$ is $(c,m)$-bounded.
\end{lemma}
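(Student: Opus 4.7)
I would argue by induction on the nilpotency class $c$. In the base case $c=1$ the group $G$ is abelian. If $j\ge 2$ then every $y_i:=w(a_1^i,\dots,a_k^i)$ vanishes; if $j=1$ then $y_i=y_1^i$. Among the integers $1,2,\dots,2m+1$ at least $m+1$ are coprime to $p$ (already for $p=2$), and the set $\{y_1^i:i\text{ coprime to }p\}$ has at most $m$ elements, so pigeonhole produces $i_1\ne i_2\le 2m+1$ with $y_1^{i_1-i_2}=1$. Hence $|\langle X\rangle|=|\langle y_1\rangle|\le 2m$.

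For the inductive step set $Z=\gamma_c(G)\le Z(G)$ and pass to $\bar G=G/Z$ of class $c-1$. The image $\bar X$ satisfies $|\bar X|\le m$, so by the induction hypothesis $|\bar V|=|VZ/Z|$ is $(c-1,m)$-bounded. Because $|X|\le m$, the subgroup $V$ is generated by at most $m$ elements and is nilpotent of class at most $c$; a standard count of Hall basic commutators then shows $|V|$ is $(c,m)$-bounded as soon as each generator of $V$ has $(c,m)$-bounded order. Since conjugation preserves order, it suffices to bound the order of $y_1$ (the argument is symmetric in the $i$'s coprime to $p$, of which only finitely many distinct $y_i$ actually occur).

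To bound the order of $y_1$, I would first work modulo $\gamma_{j+1}(G)$. Because $w$ has degree $j$, the endomorphism $x_s\mapsto x_s^i$ of the free group $F$ sends $w$ to something congruent to $w^{i^j}$ modulo $F_{j+1}$: on the free abelian group $F_j/F_{j+1}$ each weight-$j$ basic commutator is multilinear in its $j$ arguments. Evaluating in $G$ gives $y_i\equiv y_1^{i^j}\pmod{\gamma_{j+1}(G)}$. Pigeonhole on $i\in\{1,\dots,2m+1\}$ coprime to $p$ then produces $i_1\ne i_2$ with $y_1^{i_1^j-i_2^j}\in\gamma_{j+1}(G)$, bounding the order of $y_1$ in $\gamma_j(G)/\gamma_{j+1}(G)$ by $(2m+1)^c$. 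Combined with the induction hypothesis applied in $\bar G$ (which gives $y_1^{N}\in Z$ for some $(c-1,m)$-bounded $N$), one ends up with a central element $y_1^N$ whose order still has to be controlled.

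Bounding this central element $y_1^N$ is the main technical obstacle. My plan is to exploit the polynomial dependence of $y_i$ on $i$ in $G$: by iterated Hall--Petresco collection up the lower central series, one expresses $y_i=\prod_l c_l(a)^{P_l(i)}$ in a Hall basis of $\gamma_j(G)$, with each polynomial $P_l$ of degree at most the weight of $c_l$, hence at most $c$. The constraint that $\{y_i:i\text{ coprime to }p\}$ contains at most $m$ elements, applied level by level in each successive quotient $\gamma_r(G)/\gamma_{r+1}(G)$ by the same pigeonhole trick, forces every Hall basis element $c_l(a)$ actually appearing in the expansion of $y_1$ to have $(c,m)$-bounded order, and hence bounds the order of $y_1^N$ (and so of $y_1$). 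The delicate point is this Hall-collection bookkeeping, since the factors $c_l(a)$ are not independent in $G$ and the relations among them, arising from the equalities $y_{i_1}=y_{i_2}$ at each level, must be tracked carefully through the induction; once this is carried out, the counting argument from the inductive step yields the required $(c,m)$-bound on $|V|$.
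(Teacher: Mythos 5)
Your opening moves coincide with the paper's: reduce to the leading term via the congruence $w(x^s)\equiv w(x)^{s^j} \pmod{F_{j+1}}$ and pigeonhole over the more than $m$ residues $s\le 2m+1$ coprime to $p$. But the proof stands or falls on controlling the higher-weight error terms, and that is exactly the step you leave unresolved. Your plan --- expand $y_i$ via Hall--Petresco as $\prod_l c_l(a)^{P_l(i)}$ and argue ``level by level'' that each basic commutator $c_l(a)$ occurring must have bounded order --- does not work as stated: a product $\prod_l c_l(a)^{P_l(i)}$ can take at most $m$ values as $i$ ranges over the $p'$-integers without any individual factor having small order, precisely because, as you note yourself, the $c_l(a)$ are not independent in $G$; nothing in the hypothesis separates the contributions of the different basis elements. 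So the claim that the value constraint ``forces every Hall basis element to have $(c,m)$-bounded order'' is unjustified, and the argument is incomplete at its central point. The induction on the class $c$ does not rescue this, because after passing to $G/\gamma_c(G)$ you are left with a central element $y_1^N$ about whose order the induction hypothesis says nothing.

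The paper's resolution is an induction on $c-j$ (the co-degree of the word) rather than on $c$, together with one observation you are missing. Write $w(x^s)=w(x)^{s^j}w_s(x)$ with $w_s\in F_{j+1}$, so that $w_s(a_1^i,\dots,a_k^i)=\bigl(w(a_1^i,\dots,a_k^i)^{s^j}\bigr)^{-1}w(a_1^{is},\dots,a_k^{is})$. Both factors range over sets of at most $m$ conjugates, so the set of conjugates of the values $w_s(a_1^i,\dots,a_k^i)$ with $(i,p)=1$ has at most $m^2$ elements; hence the induction hypothesis applies to the word $w_s$, which has strictly larger degree, and bounds the subgroup generated by these values by some $B=B(c,m)$ independent of $s$. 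After first replacing $\langle X\rangle$ by an abelian group with at most $m$ generators (via Lemma \ref{zero} --- a reduction you also skip, and which is what makes the next step cheap), one factors out the bounded subgroup generated by all elements of order at most $B$, making the relation $w(a_1^s,\dots,a_k^s)=w(a_1,\dots,a_k)^{s^j}$ exact for all $s$ coprime to $p$; your pigeonhole then bounds the order of every generator of $\langle X\rangle$ simultaneously. To salvage your write-up, replace the Hall--Petresco bookkeeping by this induction on the degree of the error word.
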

\begin{proof} Let $W=\langle X\rangle$. As in Lemma \ref{zero}, the order of $W'$ is $m$-bounded. Thus we can pass to the quotient $G/W'$  
 and assume that $W$ is abelian.

If $j\geq c+1$, then $W=1$ and there is nothing to prove. Therefore we assume that $j\leq c$ and use induction on $c-j$. In the free group, modulo $F_{j+1}$, the word $w$ is a product of 
$\gamma_j$-words in $x_1,\dots,x_k$. Therefore for any $s$ we have $$w(x_1^s,x_2^s,\dots,x_k^s)=w(x_1,x_2,\dots,x_k)^{s^j}w_s(x_1,x_2,\dots,x_k),$$ where $w_s(x_1,x_2,\dots,x_k)$ is a word of degree at least $j+1$ (see for example \cite[Lemma 1.3.3]{Segal}). Here the word $w_s$ depends on $s$. For this word, if $s$ is chosen coprime to $p$, there are at most $m^2$ conjugates of elements of the form $w_s(a_1^i,a_2^i,\dots,a_k^i)$, where $(i,p)=1$. Indeed, we have $$w_s(a_1^i,a_2^i,\dots,a_k^i)=(w(a_1^i,a_2^i,\dots,a_k^i)^{s^j})^{-1}w(a_1^{is},a_2^{is},\dots,a_k^{is}).$$ In the group $G$ there are at most $m$ conjugates of elements of the form $w(a_1^i,a_2^i,\dots,a_k^i)$ and as many of the form $w(a_1^{is},a_2^{is},\dots,a_k^{is})$. Hence, there are at most $m^2$ conjugates of elements of the form $w_s(a_1^i,a_2^i,\dots,a_k^i)$.

Since $w_s\in F_{j+1}$, we can use induction and conclude that the subgroup
generated by the conjugates in $G$ of elements of the form $w_s(a_1^i,a_2^i,\dots,a_k^i)$, where $(i,p)=1$, has $(c,m)$-bounded order $B$. We emphasize that $B$ does not depend on $s$. Recall that $W$ is an abelian subgroup with at most $m$ generators. The product $U$ of all subgroups of order at most $B$ contained in $W$ has order bounded in terms of $B$ and $m$ only. Thus, we can pass to the quotient over $U$ and assume that $$w(a_1^s,a_2^s,\dots,a_k^s)=w(a_1,a_2,\dots,a_k)^{s^j}$$ whenever $(s,p)=1$.

Here, by the hypothesis, the left-hand side of the equality can take at most $m$ different values while $s$ can be any integer coprime to $p$. Note that the number of positive integers coprime to $p$ and smaller than or equal to $2m+1$ is bigger than $m$. Thus, there exist two integers $s_1$ and $s_2$, smaller than or equal to $2m+1$, such that $$w(a_1,a_2,\dots,a_k)^{s_1^j}=w(a_1,a_2,\dots,a_k)^{s_2^j}.$$ In particular, the order of $w(a_1,a_2,\dots,a_k)$ is at most $(2m+1)^j$. We conclude that $w(a_1,a_2,\dots,a_k)$ has finite $(c,m)$-bounded order. The equality $w(a_1^s,a_2^s,\dots,a_k^s)=w(a_1,a_2,\dots,a_k)^{s^j}$ further shows that for each $s$ the element $w(a_1^s,a_2^s,\dots,a_k^s)$ has order dividing that of $w(a_1,a_2,\dots,a_k)$. Thus, $W$ is an abelian subgroup with at most $m$ generators, each of $(c,m)$-bounded order. Therefore the order of $W$ is $(c,m)$-bounded.
\end{proof}

Important properties of the verbal subgroup corresponding to a multilinear commutator word in a soluble group are presented in the following proposition.

\begin{proposition}[{\cite[Theorem B]{fernandez-morigi}}] \label{series}
Let $w$ be a multilinear commutator word, and let $G$ be  a soluble group.
There exists a series of subgroups $$w(G)=K_1\geq\dots\geq K_l=1$$ such that:
\begin{enumerate}
\item All subgroups $K_i$ are normal in $G$.
\item The length $l$ of the series is bounded in terms of $w$ and the derived length of $G$.
\item Every section $K_i/K_{i+1}$ is abelian and can be generated by $w$-values in $G/K_{i+1}$ all of whose powers are also $w$-values.
\end{enumerate}
\end{proposition}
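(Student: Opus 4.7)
The plan is by induction on the derived length $d$ of $G$. The base case $d=1$ is immediate: $G$ is abelian so $w(G)=1$, and the series of length one suffices. For $d \geq 2$, set $A := G^{(d-1)}$, the last non-trivial term of the derived series, which is abelian and normal in $G$ with $G/A$ of derived length $d-1$. Applying the inductive hypothesis to $G/A$ gives a series $w(G/A) = \bar K_1 \geq \cdots \geq \bar K_s = \bar 1$ of length $s$ bounded in terms of $w$ and $d-1$ whose sections are abelian and generated by $w$-values with the power-closure property. The goal is to transport this series back to a series inside $w(G)$ ending at $w(G) \cap A$. The subtle point is that the bare intersection $w(G) \cap \tilde K_i$ (with $\tilde K_i$ the preimage of $\bar K_i$) need not be generated by $w$-values, so one instead defines each $K_i$ as the normal closure in $G$ of appropriate lifts of the $w$-value generators of $\bar K_i/\bar K_{i+1}$, stacked with $K_{i+1}$. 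The power-closure property in $G/K_{i+1}$ then follows because $w$-values in $G/\tilde K_{i+1}$ lift to $w$-values in $G$.

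The remaining and most technical step is to extend the series from $B := w(G) \cap A$ down to $1$ in a bounded number of further steps. Since $B \subseteq A$ it is abelian, and it is generated as a normal subgroup of $G$ by $w$-values lying in $A$. The key tool is the multilinearity of $w = w(x_1, \ldots, x_k)$: for each coordinate $i$ the identity
\[
w(x_1, \ldots, x_i y_i, \ldots, x_k) \equiv w(x_1, \ldots, x_i, \ldots, x_k) \cdot w(x_1, \ldots, y_i, \ldots, x_k)
\]
holds modulo commutators of strictly higher weight than $w$, and iterating yields $w(\ldots, x_i^n, \ldots) \equiv w(\ldots, x_i, \ldots)^n$ modulo the same subgroup. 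Exploiting this, one builds a filtration $B = B_0 \geq B_1 \geq \cdots \geq B_r = 1$ by $G$-normal subgroups, each step absorbing the higher-weight error terms, so that every section $B_j/B_{j+1}$ is generated by $w$-values in $G/B_{j+1}$ whose powers are again $w$-values.

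The main obstacle is controlling the length $r$ of the filtration of $B$ purely in terms of $w$ and $d$. The naive candidate, the $G$-module lower central series $B \geq [B,G] \geq [B,{}_2 G] \geq \cdots$, need not terminate in a number of steps bounded by $d$ alone. The remedy is a filtration tailored to the structure of $w$: at each step one records how many iterations of the multilinearity identity are needed to rewrite generators as $w$-value powers, and the bracket depth of $w$ together with the derived length $d$ (which controls how many times one can descend along the higher-weight error terms inside the soluble chain above $A$) bound the total number of layers. Ensuring that all the constructed subgroups remain normal in $G$ and that the power-closure property holds at every level is the principal technical bookkeeping of the proof.
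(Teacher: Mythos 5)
This proposition is not proved in the present paper; it is imported from Theorem B of Fern\'andez-Alcober and Morigi, so your sketch has to be measured against the argument given there. Your plan --- induct on the derived length $d$, pull the series back from $G/A$ with $A=G^{(d-1)}$, then filter $w(G)\cap A$ using multilinearity --- has two genuine gaps. First, condition (3) does not survive the pull-back. The inductive hypothesis says that a generator $\bar g$ of $\bar K_i/\bar K_{i+1}$ and all its powers are $w$-values of $G/\tilde K_{i+1}$, where $\tilde K_{i+1}\supseteq A$. You can certainly choose a lift $g$ that is an honest $w$-value of $G$, but the statement that $\bar g^{\,n}$ is a $w$-value modulo $\tilde K_{i+1}$ only gives $g^n=w(h_1,\dots,h_k)\,t$ with $t\in\tilde K_{i+1}$; nothing places $t$ in $K_{i+1}$, so $g^n$ need not be a $w$-value of the finer quotient $G/K_{i+1}$. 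The power-closure property becomes strictly harder to satisfy as the quotient gets larger, and the same discrepancy makes your transported sections only abelian-by-abelian rather than abelian (the kernel of $K_i/K_{i+1}\to \tilde K_i/\tilde K_{i+1}$ is a section of $A$ that your construction does not control). Second, the assertion that $B=w(G)\cap A$ is generated, even as a normal subgroup, by $w$-values lying in $A$ is unsupported; proving that such intersections are generated by suitably constrained $w$-values is essentially the content of the theorem, not an available input. Your final paragraph, where the bounded-length filtration of $B$ with the power-closure property would have to be built, names the difficulty but supplies no construction.

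The proof in the cited reference avoids both problems by never intersecting. For each tuple $\mathbf u=(u_1,\dots,u_k)$ of non-negative integers one forms the subgroup $W_{\mathbf u}$ generated by all values $w(g_1,\dots,g_k)$ with $g_j\in G^{(u_j)}$. Each $W_{\mathbf u}$ is normal in $G$ and generated by $w$-values by construction, only boundedly many tuples are relevant since $G^{(d)}=1$, and the series is extracted from products of these subgroups ordered by $u_1+\dots+u_k$, so the length bound in terms of $w$ and $d$ is visible from the outset. Abelianness of the sections and the power-closure property are then established by a subsidiary induction on the structure of the outer commutator word, using precisely the linearity phenomenon you identify --- an entry lying in an abelian normal subgroup can be raised to a power and the value is raised to the same power modulo deeper terms --- but applied to these explicitly constructed verbal subgroups. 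Replacing your pulled-back series and the ad hoc filtration of $B$ by the subgroups $W_{\mathbf u}$ is the missing idea.
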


The next lemma supplies the proof of Theorem \ref{ra} in the particular case where the group $G$ is soluble. The result holds for any $q\ge 1$ but we will need it only in the case $q=1$. Note that $G$ here is not required to be residually finite. 
\begin{lemma}\label{soluble} Let $m,n,q,s$ be positive integers. Suppose that $w=w(x_1,\ldots,x_k)$ is a multilinear commutator word and $v=[w^q,{}_ny]$. Assume that $G$ is a soluble group of derived length $s$ such that $v$ has at most $m$ values in $G$. Then the order of $v(G)$ is $(v,m,s)$-bounded.
\end{lemma}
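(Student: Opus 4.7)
The plan is to reduce to the case $v(G)$ abelian via Lemma~\ref{zero} and then bound $|v(G)|$ by applying Proposition~\ref{series} to $w$ and analysing the induced filtration of $v(G)$ section by section.

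By Lemma~\ref{zero}, $|v(G)'|$ is $m$-bounded, so I would first pass to $G/v(G)'$ and work under the assumption that $v(G)$ is abelian. Since $w$ is a multilinear commutator word---whereas $v=[w^q,{}_n y]$ is not, because $y$ is repeated when $n\geq 2$ and a $q$-th power appears when $q\geq 2$---I would then apply Proposition~\ref{series} to $w$ and the soluble group $G$, obtaining a normal series
\[
w(G)=K_1\geq K_2\geq\cdots\geq K_l=1
\]
of length $l$ bounded in terms of $w$ and $s$, in which each $K_i/K_{i+1}$ is abelian and generated by $w$-values in $G/K_{i+1}$ all of whose powers are again $w$-values. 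Setting $V_i=v(G)\cap K_i$, this induces a series $v(G)=V_1\geq V_2\geq\cdots\geq V_l=1$ with abelian sections $V_i/V_{i+1}\leq K_i/K_{i+1}$. Since $l$ is bounded, the whole problem reduces to bounding each $|V_i/V_{i+1}|$ by a $(v,m,s)$-bounded quantity.

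To bound a single section I would work in $\bar G=G/K_{i+1}$ and write the abelian section $\bar K=K_i/K_{i+1}$ additively, with $\bar h\in\bar G$ acting by the endomorphism $\phi_{\bar h}=\bar h-1$. For each $w$-value $\bar a\in\bar K$ with all powers $\bar a^j$ being $w$-values (generators of $\bar K$ provided by Proposition~\ref{series}(3)) and each $\bar h\in\bar G$, the identity $[\bar a^{jq},{}_n\bar h]=jq\,\phi_{\bar h}^n\bar a$ holds in $\bar K$, and each $[\bar a^{jq},{}_n\bar h]=[(\bar a^j)^q,{}_n\bar h]$ is a $v$-value in $\bar G$. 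As $j$ varies these form a cyclic subgroup inside the at-most-$m$ $v$-values of $\bar G$, forcing $q\phi_{\bar h}^n\bar a$ to have order at most $m$. Lemma~\ref{abel} then distributes $\phi_{\bar h}^n$ over $\mathbb Z$-linear combinations of generators of $\bar K$, so these ``$n$-fold Engel operators'' produce a $G$-invariant subgroup of bounded exponent inside $\bar K$ that accounts for the images of $v$-values; combined with the at-most-$m$-generator structure this delivers a $(v,m,s)$-bound on $|V_i/V_{i+1}|$.

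The main obstacle is precisely that $v$ is not a multilinear commutator word, so Proposition~\ref{series} cannot be applied to $v$ directly. One must instead transport the series of Proposition~\ref{series} for $w$ to $v(G)$ and, in each abelian section, reconstruct $v$-value information out of the generating $w$-values; here the abelian distributivity of Lemma~\ref{abel} and the power-closure property of generators (part (3) of Proposition~\ref{series}) do the essential work. Bookkeeping of contributions to $V_i/V_{i+1}$ coming from products of $v$-values whose individual factors need not lie in $K_i$ is the most delicate point of the argument.
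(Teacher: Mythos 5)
Your opening moves coincide with the paper's: reduce to $v(G)$ abelian via Lemma~\ref{zero}, apply Proposition~\ref{series} to $w$ (not to $v$, which is indeed not a multilinear commutator word), and use the power-closure of the distinguished generators $\bar a$ of a section to see that $[\bar a^{jq},{}_n\bar h]=[\bar a^{q},{}_n\bar h]^{j}$ runs over $v$-values, so that $[\bar a^{q},{}_n\bar h]$ has order at most $m$. But the second half of your plan has a genuine gap. First, the intended conclusion of your section analysis is too strong: what the Engel-operator computation legitimately yields, after factoring out a $(m,q)$-bounded subgroup of $v(G)$, is only that $[K_i/K_{i+1},{}_n t]=1$ for all $t$, i.e.\ an $n$-Engel condition on a possibly \emph{infinite} abelian section; it does not bound anything by itself. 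Second, the step you defer as ``bookkeeping'' is precisely where the strategy breaks: $V_i/V_{i+1}=(v(G)\cap K_i)/(v(G)\cap K_{i+1})$ is an intersection, not a subgroup generated by $v$-values of the special form $[\bar a^{q},{}_n\bar h]$ with $\bar a$ a distinguished generator, and Lemma~\ref{abel} only distributes $[\,\cdot\,,{}_n t]$ over products \emph{inside} a normal abelian subgroup --- it gives you no handle on a general generator $[g^{q},{}_n t]$ of $v(G)$ with $g\in G_w$ arbitrary, nor on which products of such generators land in $K_i$.

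The paper closes exactly this gap by a quite different continuation. It inducts from the bottom of the series, so that $K=v(G)\cap L$ has bounded index in $v(G)$; then it converts the Engel condition $[K,{}_n t]=1$ into centrality of $K$ using Casolo's Lemma~\ref{casolo} (conjugation by $t$ has $m$-bounded order on $K$ because $C_G(v(G))$ has $m$-bounded index), and centrality of all of $v(G)$ using Schur's theorem --- each time factoring out a further bounded subgroup. Once $v(G)$ is central, every $w^{q}$-value is right $(n+1)$-Engel, hence left $(n+2)$-Engel by Lemmas~\ref{heineken} and~\ref{inverse}; Lemmas~\ref{fitting}, \ref{normal closure}, \ref{danilo} and~\ref{criterion for nilpotency} then make $U\langle t\rangle$ nilpotent of bounded class for the normal closure $U$ of $\langle g^{q}\rangle$ in $\langle g,t\rangle$, and Lemma~\ref{Pavel}, applied to $\tilde v=[x_1,{}_{n-1}x_2]$ via the identity $[g^{q},{}_n t]=[t^{-g^{q}},{}_{n-1}t]^{t}$, finally bounds the order of an \emph{arbitrary} $v$-value. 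That last chain is what bounds the orders of the at most $m$ generators of $v(G)$ and hence $|v(G)|$; nothing in your proposal substitutes for it, so as written the argument does not go through.
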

\begin{proof} In view of Lemma \ref{zero} we may assume that $v(G)$ is abelian. Consider a series in $w(G)$ as in Proposition \ref{series}.
We will use induction on the length of this series, the case $w(G)=1$ being trivial. Let $L$ be the last nontrivial term of the series. By induction we assume that the image of $v(G)$ in $G/L$ has finite $(v,m,s)$-bounded order.
Set $K=v(G)\cap L$. It follows that the index of $K$ in $v(G)$ is $(v,m,s)$-bounded.

The subgroup $L$ is generated by $w$-values all of whose powers are $w$-values. Let $g\in L$  be one of those $w$-values. Then for every positive integer $i$ the element $[g^{iq},{}_nt]$ is a $v$-value. As $L$ is abelian, it follows that $[g^{iq},{}_nt]=[g^q,{}_nt]^i$. Therefore there are two different integers $i_1,i_2$ with $0\le i_1<i_2\le m$ such that $[g^q,{}_nt]^{i_1}=[g^q,{}_nt]^{i_2}$. It follows that $[g^q,{}_nt]$ has order at most $m$, and consequently $[g,{}_nt]$ has order at most $mq$. Let $T_1$ be the subgroup of $v(G)$ generated by all elements of order at most $mq$. As $v(G)$ is abelian with at most $m$ generators, $T_1$ is finite with $(m,q)$-bounded oder. Thus, we can pass to the quotient  $G/T_1$ and assume that $[g,{}_nt]=1$ for every generator $g$ of $L$ chosen as above and for every $t\in G$. As $L$ is abelian, it follows that $[L,{}_nt]=1$ for every $t\in G$. In particular, $[K,{}_n t]=1$  for every $t\in G$.

Since the index of $C_G(v(G))$ in $G$ is $m$-bounded, conjugation by an arbitrary element $t\in G$ induces an automorphism of $K$ of $m$-bounded order, say $r$. Lemma \ref{casolo} tells us that $[K^{r^{n-1}},t]=1$. As $K$ is abelian, it follows that $[K,t]$ has exponent dividing $r^{n-1}$, which is $(m,n)$-bounded. Let $T_2$ be the subgroup of $v(G)$ generated by all elements of order at most $r^{n-1}$. We can pass to the quotient $G/T_2$ and without loss of generality assume that $[K,t]=1$ for every $t\in G$. Therefore $K$ is contained in the center of the group $G$. Further, note that $K\langle t^r\rangle$ is a central subgroup of $v(G)\langle t\rangle$ and has $(v,m,s)$-bounded index in $v(G)\langle t\rangle$. So by Shur's Theorem the derived subgroup of $v(G)\langle t\rangle$ has $(v,m,s)$-bounded order, which does not depend on the choice of $t$. Arguing as before and factoring out an appropriate small subgroup of $v(G)$, we may assume that $[v(G),t]=1$ for every $t\in G$, that is, $v(G)$ is contained in the center of $G$.

In particular, $[g^q,{}_{n+1}t]=1$ for every $g\in G_w$ and every $t\in G$. So every $w^q$-value is right $(n+1)$-Engel in $G$. Thus, by Lemma \ref{heineken} combined with Lemma \ref{inverse}, every $w^q$-value is left $(n+2)$-Engel. Lemma \ref{fitting} now says that $w^q(G)$ is locally nilpotent.

Choose again $g\in G_{w}$ and $t\in G$. It follows from Lemma \ref{normal closure} that the normal closure $U$ of the subgroup $\langle g^q\rangle$
in the group $\langle g,t\rangle$ is generated by the set $A=\{(g^q)^{t^i}|i=0,\dots, n\}$ whose elements are left $(n+2)$-Engel. Lemma \ref{danilo} now tells us that $U$ is nilpotent with $(n,s)$-bounded class. As $[a,{}_{n+1}t]=1$ for every $a\in A$, Lemma \ref{criterion for nilpotency} shows that  $U\langle t\rangle$ is nilpotent of $(v,m,s)$-bounded class.

Hence, we are in a position where Lemma \ref{Pavel} can be applied (with just an arbitrary $p$). Note that $$[g^q,{}_{n}t]=[(t^{-1})^{g^qt},{}_{n-1} t]=[t^{-g^q},{}_{n-1}t]^{t},$$ and so $[g^q,{}_{n}t]$ is a value of the word
$\tilde v=[x_1,_{n-1}x_2]$ in the subgroup $\langle t^{-g^q},t\rangle$ of $U\langle t\rangle$. As $\tilde v((t^{-g^q})^i,t^i)=[g^q,{}_{n}t^i]^{t^{-i}}\in G_v$, the set of all conjugates in $\langle t^{-g^q},t\rangle$ of elements of the form $\tilde v((t^{-g^q})^i,t^i)$, where $i$ is an integer, has at most $m$ elements. Therefore it follows from Lemma \ref{Pavel} that the cyclic subgroup generated by $[g^q,{}_{n}t]$ has $(v,m,s)$-bounded order. 

Thus, we have shown that $v(G)$ is an abelian group of rank at most $m$ generated by elements of $(v,m,s)$-bounded order. Hence, the order of $v(G)$ is  $(v,m,s)$-bounded. The proof is complete.
\end{proof}

An important family of multilinear commutator words is formed by the derived words $\delta_k$, on $2^k$ variables, which are defined recursively by
$$\delta_0=x_1,\qquad \delta_k=[\delta_{k-1}(x_1,\ldots,x_{2^{k-1}}),\delta_{k-1}(x_{2^{k-1}+1},\ldots,x_{2^k})].$$
Of course $\delta_k(G)=G^{(k)}$, the $k$-th derived subgroup of $G$. We will need the following well-known result (see for example \cite[Lemma 4.1]{S2}). 
\begin{lemma}\label{lem:delta_k} Let $G$ be a group and  let $w$ be a multilinear commutator word on $k$ variables. Then each $\delta_k$-value is a $w$-value.
\end{lemma}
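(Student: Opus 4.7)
The plan is to proceed by induction on $k$ with a slightly strengthened inductive statement. Specifically, I would prove the following claim: \emph{for every $k\geq 1$ and every multilinear commutator word $u$ on $i$ variables with $1\leq i\leq k$, each $\delta_k$-value in $G$ is a $u$-value.} Setting $i=k$ recovers the lemma.

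The base case $k=1$ forces $i=1$ and $u=x_1$; every element of $G$ is trivially an $x_1$-value, so there is nothing to do. For the inductive step, fix $k\geq 2$ and a multilinear commutator word $u$ on $i\leq k$ variables. If $i=1$ the claim is again trivial, so assume $i\geq 2$. By the recursive definition of multilinear commutator words, one can write $u=[u_1,u_2]$, where $u_1$ and $u_2$ are multilinear commutators on $j$ and $i-j$ disjoint variables respectively, with $1\leq j\leq i-1$; in particular $j,i-j\leq i-1\leq k-1$.

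A $\delta_k$-value has the form $[\delta_{k-1}(a_1,\ldots,a_{2^{k-1}}),\delta_{k-1}(a_{2^{k-1}+1},\ldots,a_{2^k})]$. Applying the induction hypothesis to $u_1$ (on $j\leq k-1$ variables) and to $u_2$ (on $i-j\leq k-1$ variables), each of the two entries of the outer commutator equals, respectively, a $u_1$-value and a $u_2$-value in $G$. Substituting these expressions and using the disjointness of the variable sets of $u_1$ and $u_2$ exhibits the given $\delta_k$-value as $u$ evaluated on suitable elements of $G$, completing the induction.

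The only real subtlety is choosing the right inductive statement. The naive version that fixes $u$ to have exactly $k$ variables does not close up, because after one decomposition step the subwords $u_1,u_2$ have at most $k-1$ variables but may have \emph{strictly fewer} than $k-1$. Allowing the hypothesis to cover all $i\leq k$ resolves this, after which the argument is a routine structural induction on the tree shape of $u$; no commutator calculus beyond the definition of a $\delta_k$-value is required.
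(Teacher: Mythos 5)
Your proof is correct and is essentially the standard argument (the paper itself gives no proof, only a citation to \cite[Lemma 4.1]{S2}, where the same structural induction on $k$ appears). The strengthening of the inductive statement to cover all multilinear commutator words on at most $k$ variables is exactly the right fix, and the rest is the routine decomposition $u=[u_1,u_2]$ into subwords on disjoint variable sets.
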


Now we are ready to embark on the proof of Theorem \ref{ra}. 

\begin{proof}[Proof of Theorem \ref{ra}] Recall that $w=w(x_1,\ldots,x_k)$ is a multilinear commutator word. We wish to prove that the word $v=[w,{}_ny]$ is concise in residually finite groups. Thus, let $G$ be a residually finite group in which the word $v$ has only finitely many values. We need to show that $v(G)$ is finite.

In view of Lemma \ref{zero} we may assume that $v(G)$ is abelian. Since $v(G)$ is finitely generated, it is clear that elements of finite order 
in $v(G)$ form a finite normal subgroup. We pass to the quotient over this subgroup and without loss of generality assume that $v(G)$ is torsion-free.
Let $\bar G$ be any finite quotient of $G$. Since $v(\bar G)$ is abelian and the image of $w(\bar G)$ in $\bar G/v(\bar G)$ consists of right Engel elements, we conclude that $\bar G$ is soluble. Thus, our group $G$ is residually soluble. Taking into account that $C_G(v(G))$ has finite index in $G$, we deduce that some term of the derived series of $G$, say $G^{(l)}$, is contained in $C_G(v(G))$. Let $j$ be the maximum of the numbers $k$ and $l$. Then $G^{(j)}$ centralizes $v(G)$ and, by Lemma \ref{lem:delta_k}, every $\delta_j$-value is also a $w$-value. 

Let $x,y\in G$ with $y$ being a $\delta_j$-value. Using the formula $[x,y,y]=[y^{-x},y]^y$ and taking into account that $y^{-1}$ is a $w$-value (Lemma \ref{inverse}), we deduce that the commutator $[x,{}_{n+1}y]$ is a $v$-value and therefore, since $[v(G),y]=1$, we have $[x,{}_{n+2}y]=1$. We just have shown that each $\delta_j$-value is $(n+2)$-Engel in $G$. In view of Proposition \ref{non22} any subgroup of $G$ generated by finitely many $\delta_j$-values is nilpotent and so $G^{(j)}$ is locally nilpotent. 

Choose $\delta_{2j}$-values $y_1,\dots,y_f\in G$ and for each $i=1,\dots,f$ write $y_i=\delta_k(g_{i1},\dots,g_{i2^{j}})$, where each $g_{ii'}$ is a $\delta_j$-value. Further, choose an arbitrary element $t\in G$ and let $H$ be the minimal $t$-invariant subgroup of $G$ containing all these elements $g_{ii'}$. Since the image of $t$ in $G/v(G)$ acts on each $g_{ii'}$ as an Engel element, Lemma \ref{normal closure} tells us that the image of $H$ in $G/v(G)$ is generated by finitely many $\delta_j$-values. Therefore the image of $H$ is nilpotent and, since $v(G)$ is abelian, we conclude that $H\langle t\rangle$ is soluble. Now Proposition \ref{soluble} tells us that $v(H\langle t\rangle)$ is finite. Since $v(G)$ is torsion-free, $v(H\langle t\rangle)=1$. In particular, $[y_i,{}_nt]=1$ for each $i=1,\dots,f$. We now invoke Lemma \ref{criterion for nilpotency} and conclude that $\langle t,y_1,\dots,y_f\rangle$ is nilpotent. This happens for any choice of $\delta_{2j}$-values $y_1,\dots,y_f\in G$ and $t\in G$. Therefore $\langle G^{(2j)},t\rangle$ is locally nilpotent for each $t\in G$.

As $v(G)\cap G^{(2j)}$ is a finitely generated abelian group, for each $t\in G$ there exists an integer $s$ such that $[v(G)\cap G^{(2j)},{}_st]=1$. Since the index of $C_G(v(G))$ in $G$ is finite, the conjugation by $t$ is an automorphism of $v(G)\cap G^{(2j)}$ of finite order, say $r$. In view of  Lemma \ref{casolo} we obtain that $[(v(G)\cap G^{(2j)})^{r^{s-1}},t]=1$. As $v(G)\cap G^{(2j)}$ is abelian, it follows that $[v(G)\cap G^{(2j)},t]$ has finite exponent at most $r^{s-1}$. Taking into account that $v(G)$ is torsion-free, we conclude that $[v(G)\cap G^{(2j)},t]=1$ for every $t\in G$. Therefore $v(G)\cap G^{(2j)}$ is contained in the center of the group $G$. 

Since $G/G^{(2j)}$ is soluble, Proposition \ref{soluble} guarantees that the image of $v(G)$ in $G/G^{(2j)}$ is finite. In other words, $v(G)\cap G^{(2j)}$ has finite index  in $v(G)$. Note that $(v(G)\cap G^{(2j)})\langle t^{r}\rangle$ is a central subgroup of  finite index in $v(G)\langle t\rangle$. So by Schur's Theorem the commutator subgroup of $v(G)\langle t\rangle$ is finite. Since the commutator subgroup is contained in $v(G)$, which is torsion-free, it follows that $[v(G), t]=1$ for every $t\in G$, that is, $v(G)$ is contained in the center of $G$. In particular, $[g,{}_{n+1}t]=1$ for every $g\in G_w$ and every $t\in G$. Thus, every $w$-value is right $(n+1)$-Engel in $G$, whence by Lemma \ref{heineken} combined with Lemma \ref{inverse} every $w$-value is left $(n+2)$-Engel. 

Let $g\in G_{w}$ and $t\in G$. It follows from Lemma \ref{normal closure} that the normal closure $U$ of the subgroup $\langle g\rangle$ in the group $\langle g,t\rangle$ is generated by the set $A=\{(g)^{t^i}|t=0,\dots,n\}$ whose elements are left $(n+2)$-Engel. Therefore, by Proposition \ref{non22}, $U$ is nilpotent. Hence, by Lemma \ref{criterion for nilpotency} the subgroup $\langle g,t\rangle$ is nilpotent, too. 

Now we are in a situation where Lemma \ref{Pavel} can be applied. Note that $[g,{}_{n}t]=[(t^{-1})^{gt},{}_{n-1}t]=[t^{-g},{}_{n-1}t]^{t}$. Thus $[g,{}_{n}t]$ is a value of the word $\tilde v=[x_1,_{n-1}x_2]$ in the group $\langle t^{-g},t\rangle$. Since $\tilde v((t^{-g})^i,t^i)=[g,{}_{n}t^i]^{t^{-i}}\in G_v$, the set of all conjugates in $\langle t^{-g},t\rangle$ of elements of the form $\tilde v((t^{-g})^i,t^i)$, where $i$ is an integer, is finite. So it follows from Lemma \ref{Pavel} that $[g,{}_{n}t]$ has finite order. 

Thus, an arbitrary $v$-value in $G$ has finite order. Since $v(G)$ is torsion-free, we conclude that $v(G)=1$. The theorem is established.
\end{proof}

\section{Proofs of Theorems \ref{rc} and \ref{rd}} 

In the present section Theorems \ref{rc} and \ref{rd} will be proved. We start with the general remark that a word $w$ is boundedly concise in residually finite groups if and only if the order of $w(G)$ is bounded in terms of $w$ and $|G_w|$ for each finite group $G$. It follows that Theorems \ref{rc} and \ref{rd} are essentially about finite groups and their proofs will deal with corresponding questions for finite groups.

An important concept required in this section is that of weakly rational words. Following \cite{gushu} we say that a word $w$ is weakly rational if for every finite group $G$ and for every integer $e$ relatively prime to $|G|$, the set $G_w$ is closed under taking $e$-th powers of its elements. 
By \cite[Lemma 1]{gushu}, the word $w$ is weakly rational if and only if for every finite group $G$ and $g\in G_w$, the power $g^e$ belongs to $G_w$ whenever $e$ is relatively prime to $|g|$. It was shown in \cite[Theorem 3]{gushu} that for every positive integers $k$ and $q$ the word $w=[x_1,\ldots,x_k]^q$ is weakly rational.

Let $w$ be the word $[x_1,\ldots,x_k]^q$. Theorem \ref{rc} states that both words $[y,{}_nw]$ and $[w,{}_ny]$ are boundedly concise in residually finite groups. We will treat the two words separately. Proposition \ref{rational1} deals with the word $[y,{}_n w]$ while Proposition \ref{rational2} with the word $[w,{}_ny]$. 

\begin{proposition}\label{rational1} Let $k, q$ and $n$ be positive integers, and let  $w=[x_1,\ldots,x_k]^q$. The word $[y,{}_n w]$ is boundedly concise in residually finite groups.
\end{proposition}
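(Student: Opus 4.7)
Since bounded conciseness in residually finite groups is equivalent to the corresponding bounded statement for finite groups (as noted at the start of Section~5), I take $G$ to be a finite group with $|G_v|\le m$ and aim to bound $|v(G)|$ in terms of $k$, $n$, $q$, and $m$. By Lemma~\ref{zero} the commutator subgroup $v(G)'$ is $m$-bounded, so after modding out I may assume $v(G)$ is abelian and generated by at most $m$ $v$-values; also $|G/C_G(v(G))|\le m!$, so every element of $G$ acts on $v(G)$ with $m$-bounded order. For each $w$-value $t$, Lemma~\ref{abel} shows that $y\mapsto[y,{}_nt]$ restricts to a homomorphism of $v(G)$ with image in $G_v\cup\{1\}$, so $|[v(G),{}_nt]|\le m+1$. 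Applying Lemma~\ref{casolo} to $v(G)/[v(G),{}_nt]$ together with the bounded-order action yields a characteristic subgroup $V=v(G)^{N^{n-1}}$ of $(m,n)$-bounded index in $v(G)$ with $|[V,t]|\le m+1$ for every $w$-value $t$. Combining these per-$w$-value bounds with Schur's theorem on the (bounded-order) commutator subgroups that arise---and using that every $v$-value has a $G$-orbit of size at most $m$, so its centralizer in $G$ has $m$-bounded index---one iteratively quotients out bounded subgroups to arrive at $v(G)\le Z(G)$. This reduction is the main obstacle: unlike in the proof of Lemma~\ref{soluble}, one lacks Proposition~\ref{series}, so the amalgamation of per-$t$ Engel bounds into a global central reduction must be done by hand.

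Once $v(G)\le Z(G)$, every $w$-value $t$ satisfies $[y,{}_{n+1}t]=[[y,{}_nt],t]=1$ for all $y\in G$, so every $w$-value is left $(n+1)$-Engel. By Proposition~\ref{var22} (applied with the multilinear commutator word $\gamma_k$ and the exponent $q$), every subgroup of $G$ generated by a bounded number of $w$-values is nilpotent of $(n,k,q)$-bounded class. For $y\in G$ and $t\in G_w$, Lemma~\ref{normal closure} shows that the normal closure $U$ of $\langle t\rangle$ in $\langle y,t\rangle$ is generated by the $n+1$ conjugates $t,t^y,\dots,t^{y^n}$, each itself a $w$-value since $[x_1,\dots,x_k]^y=[x_1^y,\dots,x_k^y]$ and so conjugates of $\gamma_k^q$-values are again $\gamma_k^q$-values. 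Thus $U$ is nilpotent of bounded class, and Lemma~\ref{criterion for nilpotency} gives that $\langle y,t\rangle=U\langle y\rangle$ is nilpotent of some $(n,k,q)$-bounded class~$c$.

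To bound the order of $[y,{}_nt]$ I invoke Lemma~\ref{Pavel} prime-by-prime, since coprimality to a single prime $p$ need not imply coprimality to $|t|$ (the condition required to invoke weak rationality of $w$). Writing the nilpotent group $\langle y,t\rangle$ as the direct product of its Sylow subgroups $H_p$, with $y=\prod_py_p$ and $t=\prod_pt_p$, the $p$-part of $[y,{}_nt]$ equals $\tilde{v}(y_p,t_p)$ with $\tilde{v}=[x_1,{}_nx_2]$. For each $i$ coprime to $p$, the Chinese Remainder Theorem produces an integer $j$ coprime to $|t|$ with $y_p^j=y_p^i$ and $t_p^j=t_p^i$; by weak rationality of $w=[x_1,\dots,x_k]^q$ (see \cite[Theorem~3]{gushu}), $t^j\in G_w$, so $[y^j,{}_nt^j]$ is a $v$-value whose $p$-part is precisely $\tilde{v}(y_p^i,t_p^i)$. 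Hence the set of these elements---together with their $H_p$-conjugates, which arise as $p$-parts of $v$-values obtained by conjugation in $G$---has at most $m$ members, and Lemma~\ref{Pavel} bounds the subgroup it generates in $H_p$ by some $f(c,m)$ independent of $p$. Since a nontrivial $p$-part of $[y,{}_nt]$ of order at most $f(c,m)$ forces $p\le f(c,m)$, only boundedly many primes contribute, and $|[y,{}_nt]|$ is $(c,m)$-bounded. As $v(G)$ is abelian with at most $m$ generators each of $(k,n,q,m)$-bounded order, $|v(G)|$ is $(k,n,q,m)$-bounded, completing the proof.
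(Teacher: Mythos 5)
Your first reduction has a genuine gap, and it is one you flag yourself: you aim to prove $v(G)\le Z(G)$, but your per-$t$ estimates only control $[v(G),{}_nt]$ for $t\in G_w$ (for arbitrary $t\in G$ the map $y\mapsto[y,{}_nt]$ has no reason to land in $G_v$, so neither the $m+1$ bound nor Lemma~\ref{casolo} applies), and no amount of Schur-type bookkeeping turns these per-$w$-value bounds into centrality of $v(G)$ in all of $G$. The good news is that centrality is not needed, and the step you are struggling with is in fact immediate: for $h\in v(G)$ and $g\in G_w$ one has $[h,{}_ng]^i=[h^i,{}_ng]\in G_v$ for every integer $i$ by Lemma~\ref{abel}, so the $v$-value $[h,{}_ng]$ has order at most $m$; factoring out the $m$-bounded subgroup of the $m$-generated abelian group $v(G)$ generated by its elements of order at most $m$, one gets $[v(G),{}_ng]=1$ for all $g\in G_w$, whence $[t,{}_{2n}g]=[[t,{}_ng],{}_ng]=1$ for all $t\in G$, i.e.\ every $w$-value is $2n$-Engel. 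That is all the Engel information the rest of the argument requires.

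Your second paragraph contains a second, independent error: a left/right Engel mix-up. Even granting $v(G)\le Z(G)$, what you obtain is $[y,{}_{n+1}t]=1$ for all $y$, i.e.\ the $w$-value $t$ is a \emph{left} $(n+1)$-Engel element; but Lemma~\ref{normal closure} applied to the normal closure of $\langle t\rangle$ in $\langle y,t\rangle$, and likewise Lemma~\ref{criterion for nilpotency} with generating set $\{t^{y^i}\}$, require $[t,{}_sy]=1$, a \emph{right}-Engel condition on $t$ which you do not have (Heineken's lemma converts right to left, not back). The paper sidesteps both difficulties at once with the identity $[y,{}_nt]=[t^{-yt},{}_{n-1}t]=\bigl([t^{-y},{}_{n-1}t]\bigr)^{t}$: this places the $v$-value inside $H=\langle t^{-y},t\rangle$, a subgroup generated by \emph{two} $w$-values, so Proposition~\ref{var22} bounds its nilpotency class directly and Lemma~\ref{Pavel} can be applied to the word $[x_1,{}_{n-1}x_2]$ in $H$, with no normal-closure argument at all. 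Your third paragraph --- decomposing the nilpotent subgroup into its Sylow factors and using the Chinese Remainder Theorem to make weak rationality usable prime by prime --- is a legitimate and rather neat alternative to the paper's route (which first reduces to the case where $w(G)$ is a $p$-group and then recombines via Hall subgroups), but it cannot rescue the proof on its own, since it presupposes the bounded nilpotency class that the flawed second step was meant to provide.
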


\begin{proof} Let $G$ be a finite group with at most $m$ values of the word $v=[y,{}_n w]$. It is sufficient to  prove that the order of $v(G)$ is $(k,m,n,q)$-bounded. In view of Lemma \ref{zero} we may assume that $v(G)$ is abelian. Since $v(G)$ is an abelian $m$-generated group, for every integer $r$ the order of the subgroup of $v(G)$ generated by all elements of order at most $r$ is $(r,m)$-bounded. 

Using that $v(G)$ is abelian we deduce that \[[h^i, {}_n g]=[h, {}_n g]^i \] for every integer $i$ and elements $h\in v(G)$, $g\in G$. Note that in the case where $g\in G_w$ we have $[h^i,{}_ng]\in G_v$. Therefore every power of the $v$-value $[h,{}_ng]$ is a $v$-value. Since $G_v$ has at most $m$ elements, it follows that \[[h,{}_ng]^i=[h,{}_ng]^j\] for some $0\le i\neq j \le m$. Hence, the order of $[h,{}_ng]$ is at most $m$. Let $M_1$ be  the subgroup of $v(G)$ generated by all elements of order at most $m$. Since $M_1$ has  $m$-bounded order, we can pass to the quotient $G/M_1$ and thus assume that $[h,{}_n g]=1$ for every $h\in v(G)$ and $g\in G_w$. With this in mind, for any $t\in G$ and $g\in G_w$ we deduce that 
 $$[[t, {}_n g],{}_n g]=[t,{}_{(2n)}g]=1.$$ 
 This means that every $w$-value is $2n$-Engel in $G$. It follows from Lemma \ref{fitting} that $w(G)$ is nilpotent. We will now additionally assume that $w(G)$ is a $p$-group for a prime $p$. 
   
Let us fix a $v$-value $[t,{}_ng]$, where $t\in G$ and $g\in G_w$, and consider the subgroup $H=\langle g^{-t},g\rangle$. Since $G$ is a finite group in which the $w$-values are $2n$-Engel and $H$ is generated by two $w$-values, Proposition \ref{var22} implies that $H$ has $(k,q,n)$-bounded nilpotency class. Further, we know from \cite[Theorem 3]{gushu} that the word $w$ is weakly rational. Hence, $g^i\in G_w$ for every integer $i$ prime to $p$. Note that \[ [t,{}_n g^i]=[(g^i)^{- tg^i}, {}_{(n-1)} g^i]=[(g^i)^{-t},{}_{(n-1)} g^i]^{g^i},\] whence we conclude that $[g^{-i t},{}_{(n-1)} g^i]\in G_v$ for every integer $i$ coprime to $p$. In particular the set 
  \[X=\{[(g^{-t})^i,{}_{(n-1)} g^i]^x\mid x\in G, \ (i,p)=1\}\] 
   is a subset of $G_v$ and therefore $|X|\le m$. 
   
Let $\eta(x_1,x_2)$ denote the $(n-1)$-Engel word. We are in situation where $H=\langle g^{-t},g\rangle$ is nilpotent with $(k,q,n)$-bounded class and the set $$\{\eta(g^{-it},g^i)^x=[(g^{-t})^i,{}_{(n-1)} g^i]^x \mid x\in H, \ (i,p)=1\}$$ is a subset of $X$. Hence it has at most $m$ elements. We deduce from Lemma \ref{Pavel} that the order of the element $[g^{-t},{}_{(n-1)} g]$ is $(k,q,n,m)$-bounded. So the order of the arbitrary $v$-value $[t,{}_ng]$ is bounded by a number which depends only on $k,q,n$ and $m$. Of course this implies that the order of $v(G)$ is $(k,m,n,q)$-bounded.

Thus, in the particular case where $w(G)$ is a $p$-group the proposition is proved. It is important to note that we proved existence of a bound, say $B$, for $|v(G)|$ which does not depend on $p$.

We will now deal with the case where $w(G)$ is not necessarily a $p$-group.
Let $p_1,\dots,p_s$ be the set of prime divisors of the order of $w(G)$. Since the case where $s=1$ was already dealt with, we assume that $s\geq2$.
Recall that $w(G)$ is nilpotent and so any Hall subgroup of $w(G)$ is normal in $G$. For each $i=1,\dots,s$ let $N_i$ denote the Hall ${p_i}'$-subgroup of $w(G)$. The result obtained in the case where $w(G)$ is a $p$-group implies that for any $i$ the image of $v(G)$ in $G/N_i$ has order at most $B$. It follows that $v(G)$ embeds into a direct product of abelian groups of order at most $B$. Therefore the exponent of $v(G)$ divides $B!$. Thus $v(G)$ is an abelian group with $m$ generators and exponent dividing $B!$. We conclude that the order of $v(G)$ is $(k,m,n,q)$-bounded, as required.
  \end{proof}

\begin{lemma}\label{eta} Let $w=w(x_1,\dots,x_k)$ be a word and $n$ a positive integer. There is a word $\eta$ in $k(n+1)$ variables such that
$$[w,{}_ny]=\eta(x_1,x_2,\ldots,x_k,x_1^y,x_2^y,\ldots,x_k^y,\ldots,x_1^{y^n},x_2^{y^n},\ldots,x_k^{y^n}).$$
\end{lemma}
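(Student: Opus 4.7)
The plan is a direct induction on $n$. The key formal observation is that if we regard the symbols $x_j^{y^i}$ (with $0\le i$ and $1\le j\le k$) as free generators, then conjugation by $y$ in any overgroup acts on these symbols simply by the index shift $(x_j^{y^i})^y = x_j^{y^{i+1}}$. Hence passing from a word in the variables indexed by $0,1,\ldots,n-1$ to its $y$-conjugate produces a word in the variables indexed by $1,2,\ldots,n$.

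The base case $n=0$ is immediate, since $[w,{}_0 y]=w$ is already a word in $x_1,\ldots,x_k$, and we may take $\eta=w$. For the inductive step, suppose there is a word $\eta_{n-1}$ in $kn$ variables with
$$[w,{}_{n-1}y]=\eta_{n-1}(x_1,\ldots,x_k,\ldots,x_1^{y^{n-1}},\ldots,x_k^{y^{n-1}}).$$
Using the commutator identity $[a,y]=a^{-1}a^y$, one has
$$[w,{}_n y]=[w,{}_{n-1}y]^{-1}\cdot [w,{}_{n-1}y]^y.$$
The first factor is a word in the symbols $x_j^{y^i}$ with $0\le i\le n-1$; by the observation above, the second factor becomes a word in the symbols $x_j^{y^i}$ with $1\le i\le n$. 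Their product is therefore expressible as a word in the $k(n+1)$ symbols $x_j^{y^i}$ with $0\le i\le n$ and $1\le j\le k$, providing the required word $\eta$.

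There is essentially no serious obstacle here; the lemma is a formal identity in the free group. The only point requiring minor care is to treat $\eta_{n-1}$ as a genuine element of the free group on $kn$ formal symbols, so that the substitution $(x_j^{y^i})^y\mapsto x_j^{y^{i+1}}$ is well-defined at the formal level and not merely after specialising to a concrete group. Once this is done, the inductive bookkeeping on the range of the exponent $i$ closes the argument.
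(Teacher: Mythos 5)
Your proof is correct and matches the paper's argument: the paper likewise takes $\eta(x_1,\dots,x_{2k})=w(x_1,\dots,x_k)^{-1}w(x_{k+1},\dots,x_{2k})$ for $n=1$ via $[w,y]=w^{-1}w^y$ and then appeals to the same induction on $n$. Your explicit remark about the index-shift under conjugation by $y$ is just the bookkeeping the paper leaves implicit in its ``obvious induction.''
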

\begin{proof} If $n=1$, we have $[w,y]=w^{-1}w^y$ and so we can take $\eta(x_1,x_2,\dots,x_{2k})=w(x_1,\dots,x_k)^{-1}w(x_{k+1},\dots,x_{2k})$. Now an obvious induction on $n$ completes the proof.
\end{proof}

\begin{proposition}\label{rational2}  Let $k,n,q$ be positive integers and  $w=[x_1,\ldots,x_k]^q$. The word $[w,{}_ny]$ is boundedly concise in residually finite groups.
\end{proposition}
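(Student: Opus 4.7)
The plan is to follow the template of the proof of Proposition \ref{rational1}, adapted to treat the word $[w,{}_n y]$. By the opening remark of this section, it suffices to take $G$ finite with $|G_v|\le m$ and show that $|v(G)|$ is bounded in terms of $k,m,n,q$. Applying Lemma \ref{zero} I reduce to the case where $v(G)$ is abelian; then $v(G)$ has rank at most $m$, so it is enough to bound the order of a single $v$-value $u=[g,{}_n t]$, where $g\in G_w$ and $t\in G$. Note that $v(G)\le w(G)$, so once we reduce to the case where $w(G)$ is a $p$-group the same holds for $v(G)$, and the weak rationality of $w$ (cf.\ \cite[Theorem 3]{gushu}) gives $g^i\in G_w$ for every integer $i$ coprime to $p$.

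To bound $|u|$ via Lemma \ref{Pavel}, set $a_j=g^{t^{j-1}}$ for $j=1,\dots,n+1$ and $K=\langle a_1,\dots,a_{n+1}\rangle$. Each $a_j$ is a conjugate of $g$ and is therefore a $w$-value. Iterating the identity $[xy,z]=[x,z]^y[y,z]$ produces a fixed word $W_n$ in $n+1$ variables with $W_n(a_1,\dots,a_{n+1})=[g,{}_n t]$ valid in every group; since $a_j^i=(g^i)^{t^{j-1}}$, the same construction applied with $g^i$ in place of $g$ gives $W_n(a_1^i,\dots,a_{n+1}^i)=[g^i,{}_n t]$. In the $p$-group case, $g^i\in G_w$ whenever $(i,p)=1$, so $[g^i,{}_n t]\in G_v$ and the set of $G$-conjugates of $W_n(a_1^i,\dots,a_{n+1}^i)$ with $(i,p)=1$ has cardinality at most $m$. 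Provided $K$ is nilpotent of $(k,m,n,q)$-bounded class, Lemma \ref{Pavel} then forces $|\langle u\rangle|$ to be $(k,m,n,q)$-bounded, as required.

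The principal task is therefore twofold: reduce to $w(G)$ being a $p$-group and establish bounded nilpotency of $K$. Both reduce to exhibiting an $M=M(k,m,n,q)$ such that every $w$-value in $G$ is $M$-Engel, because then $w(G)$ is nilpotent by Lemma \ref{fitting} so the Hall-subgroup argument closing the proof of Proposition \ref{rational1} yields the $p$-group reduction, and since $K$ is generated by $n+1$ $w$-values, Proposition \ref{var22} makes $K$ nilpotent of $(k,q,n,M)$-bounded class. To produce such an $M$ I would transport the Engel reduction from the proof of Theorem \ref{ra} to the quantitative finite setting: the embedding $G/C_G(v(G))\hookrightarrow S_m$ gives an $m$-bounded integer $j_0$ with $G^{(j_0)}\le C_G(v(G))$, and for $j\ge\max(j_0,k)$ the identity $[x,y,y]=[y^{-x},y]^y$ together with the closure of $G_{\gamma_k}$ under inversion (Lemma \ref{inverse}) yields that every $q$-th power of a $\delta_j$-value is $(n+2)$-Engel in $G$. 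The main obstacle will be to bootstrap from this partial Engel condition to a global $M$-Engel condition on all $w$-values; I expect this to require combining the bounded order of $G/C_G(v(G))$ with Lemma \ref{casolo} to bound the exponent of $[v(G),t]$ for each $t\in G$, then factoring out the resulting bounded subgroup of $v(G)$ so that $v(G)$ lies in the centre of $G$, whereupon every $w$-value is automatically $(n+1)$-Engel.
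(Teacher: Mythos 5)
Your overall skeleton matches the paper's: reduce to a finite group, make $v(G)$ abelian via Lemma \ref{zero}, write $[g,{}_nt]$ as a fixed word in the $n+1$ conjugates $g,g^t,\dots,g^{t^n}$ (this is exactly Lemma \ref{eta}), get bounded nilpotency class of $K=\langle g,g^t,\dots,g^{t^n}\rangle$ from Proposition \ref{var22}, apply Lemma \ref{Pavel} using weak rationality of $w$, and finish with the Hall-subgroup reduction. The genuine gap sits exactly where you flag your ``main obstacle'': producing a uniform Engel condition on \emph{all} $w$-values, which both the $p$-group reduction (via Lemma \ref{fitting}) and the application of Proposition \ref{var22} require. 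Your proposed route does not close it. First, the step ``$G/C_G(v(G))\hookrightarrow S_m$ gives an $m$-bounded $j_0$ with $G^{(j_0)}\le C_G(v(G))$'' presupposes that $G/C_G(v(G))$ is soluble, which is not available here: in Theorem \ref{ra} solubility of finite quotients is derived from $w$ being a multilinear commutator word (so $G/w(G)$ has bounded derived length), whereas for $w=\gamma_k^q$ the quotient $G/w(G)$ only satisfies the law $\gamma_k^q=1$ and need not be soluble. Second, your planned use of Lemma \ref{casolo} on $v(G)$ requires first knowing $[v(G),{}_st]=1$ for some $s$; in the soluble case (Lemma \ref{soluble}) this comes out of the series of Proposition \ref{series}, and you have no substitute in a general finite group. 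Third, even granting an Engel condition on $q$-th powers of $\delta_j$-values, the passage to arbitrary $w$-values is not addressed.

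The paper closes this gap with a short trick you are missing: for $g\in G_w$ and $t\in G$ one has $[t,{}_{(n+1)}g]=[g^{-tg},{}_ng]$, and since $g^{-tg}\in G_w$ (conjugates and inverses of $w$-values are again $w$-values, by Lemma \ref{inverse} applied to $\gamma_k$), every value of the word $[x,{}_{(n+1)}w]$ is a $v$-value. Hence that word has at most $m$ values, and Proposition \ref{rational1} --- already proved, for an arbitrary Engel length --- bounds the order of its verbal subgroup. Factoring out this bounded normal subgroup makes every $w$-value $(n+1)$-Engel in $G$, after which the rest of your argument (nilpotency of $w(G)$, bounded class of $K$, Lemma \ref{Pavel}, Hall subgroups) goes through as written. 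So the fix is not to quantify the proof of Theorem \ref{ra}, but to feed the companion word $[x,{}_{(n+1)}w]$ into Proposition \ref{rational1}.
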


\begin{proof} Let $G$ be a finite group with at most $m$ values of the word $v=[w,{}_ny]$. It is sufficient to prove that the order of $v(G)$ is $(k,m, n,q)$-bounded.

By Lemma \ref{zero} we may assume that $v(G)$ is abelian. Since $v(G)$ is an abelian $m$-generated group, for every integer $r$ the order of the subgroup of $v(G)$ generated by all elements of order at most $r$ is $(r,m)$-bounded.

Using the formula $[t,{}_{(n+1)}g]=[g^{-tg},{}_ng]$ and the fact that, by Lemma \ref{inverse}, $g^{-1}\in G_w$ we observe that the element 
$[t,{}_{(n+1)}g]$ represents a $v$-value for any $g\in G_w$ and $t\in G$. Thus, the word $[x,{}_{(n+1)}w]$ has at most $m$ values in $G$. 
Applying Proposition \ref{rational1} with the word $[x,{}_{(n+1)}w]$, we deduce that the corresponding verbal subgroup has bounded order. 
Passing to the quotient over this subgroup we assume, without loss of generality, that $[t,{}_{(n+1)}g]=1$ for every $t\in G$ and $g\in G_w$. Thus, all $w$ values are $(n+1)$-Engel in $G$. In particular, it follows from Lemma \ref{fitting} that $w(G)$ is nilpotent.

Now fix a $v$-value $[g,{}_{n}t]$ with $t\in G$ and $g\in G_w$. We use Lemma \ref{eta} and write $[g,{}_{n}t]=\eta(g,g^t,\ldots,g^{t^n})$ for an appropriate word $\eta=\eta(x_1,x_2,\ldots,x_{n+1})$. Set $H=\langle g,g^t,\ldots,g^{t^n}\rangle$. Since $G$ is a finite group in which the $w$-values are $(n+1)$-Engel and $H$ is generated by $n+1$ elements which are $w$-values, Proposition \ref{var22} implies that $H$ has $(k,n,q)$-bounded nilpotency class. 

Recall that $w(G)$ is nilpotent. Assume first that $w(G)$ is a $p$-group for a prime $p$. Then $H$ is a $p$-group as well. We know from \cite[Theorem 3]{gushu}, that the word $w$ is weakly rational and so $g^i\in G_w$ for every integer $i$ coprime to $p$. 
  
Since \[\eta(g^i, g^{it},\ldots,g^{it^n})=[g^i,{}_n t],\] it follows that  $\eta(g^i,g^{it},\ldots,g^{it^n})\in G_v$ for every integer $i$ coprime to $p$. In particular the set \[X=\{\eta(g^i,g^{it},\ldots,g^{it^n})^x\mid x\in H,\ (i,p)=1\}\] is a subset of $G_v$ and hence $|X|\le m$. 
   
Since the nilpotency class of $H$ is $(k,n,q)$-bounded, we deduce from Lemma \ref{Pavel} that the order of the element $\eta(g,g^t,\ldots,g^{t^n})$ is $(k,q,n,m)$-bounded.
 Thus, the order of an arbitrary $v$-value $[g,{}_{n}t]$  is bounded by a number which depends only on $k,n,q$ and $m$. Since $v(G)$ is an abelian subgroup generated by $m$ such elements, we conclude that  
the order of $v(G)$ is $(k,m,n,q)$-bounded. Thus, in the particular case where $w(G)$ is a $p$-group the proposition is proved. It is important to note that we proved existence of a bound, say $B$, for $|v(G)|$ which does not depend on $p$.

We will now deal with the case where $w(G)$ is not necessarily a $p$-group.
Let $p_1,\dots,p_s$ be the set of prime divisors of the order of $w(G)$. Since the case where $s=1$ was already dealt with, we assume that $s\geq2$. Recall that $w(G)$ is nilpotent and so any Hall subgroup of $w(G)$ is normal in $G$. For each $i=1,\dots,s$ let $N_i$ denote the Hall ${p_i}'$-subgroup of $w(G)$. The result obtained in the case where $w(G)$ is a $p$-group implies that for any $i$ the image of $v(G)$ in $G/N_i$ has order at most $B$. It follows that $v(G)$ embeds into a direct product of abelian groups of order at most $B$. Therefore the exponent of $v(G)$ divides $B!$. Thus $v(G)$ is an abelian group with $m$ generators and exponent dividing $B!$. We conclude that the order of $v(G)$ is $(k,m,n,q)$-bounded, as required.
\end{proof}

Having completed the proofs of Proposition \ref{rational1} and Proposition \ref{rational2}, we established Theorem \ref{rc}. Now we deal with Theorem \ref{rd}.

\begin{proof}[Proof of Theorem \ref{rd}] 
Let $G$ be a finite group with at most $m$ values of the word $v=[w,{}_ny]$, where  $ w=[[x_1, x_2],[x_3,x_4]]$. It is sufficient to prove that the order of $v(G)$ is $(m,n)$-bounded. As usual, we may assume that $v(G)$ is abelian.

Choose a commutator $d=[d_1,d_2]$ in $G$ and an arbitrary element $t\in G$. We note that 
\[[t,{}_{(n+2)}d]=[d^{-td},{}_{(n+1)}d].\] Since $[d^{-td},d]\in G_w$, we conclude that $[t,{}_{(n+2)}d]\in G_v$. Thus, the values of the word $[x,{}_{(n+2)}[x_1,x_2]]$ are contained in $G_v$ and so there are at most $m$ of them. Proposition \ref{rational1} applied with the word $[x,{}_{(n+2)}[x_1,x_2]]$ enables us to deduce that the corresponding verbal subgroup has bounded order. Passing to the quotient over this subgroup we assume that $[t,{}_{(n+2)}d] =1$ for every $t\in G$ and every commutator $d\in G$.
Since all commutators are Engel, it follows from Lemma \ref{fitting} that the commutator subgroup $G'$ is nilpotent.
  
Now fix a $v$-value $[g,{}_{n}t]$ with $t\in G$ and $g\in G_w$. Write $g=[a,b]$, where $a,b$ are commutators. Lemma \ref{eta} says that there exists a word $\eta$ such that 
 \[[g,{}_{n}t]=[[a,b],{}_nt]=\eta(a,b,a^t,b^t,\ldots,a^{t^n}, b^{t^n}).\] 
Consider the subgroup $H=\langle a,b,a^t,b^t,\ldots,a^{t^n}, b^{t^n}\rangle.$ Since $G$ is a finite group in which commutators are 
$(n+2)$-Engel and $H$ is generated by $2(n+1)$ commutators, Proposition \ref{var22} tells us that $H$ has $n$-bounded nilpotency class. 

Recall that $G'$ is nilpotent. Consider first the case where $G'$ is a $p$-group for a prime $p$. It is well-known that the commutator word $[x_1,x_2]$ is weakly rational (see e.g. \cite[p.45]{Isaacs} or \cite{gushu}). Therefore $a^i,b^i$ are commutators for every integer $i$ coprime to $p$. Since \[\eta(a^i,b^i,a^{it},b^{it},\ldots,a^{it^n}, b^{it^n})=[[a^i,b^i],{}_nt],\] it follows that $\eta(a^i,b^i,a^{it},b^{it},\ldots,a^{it^n},b^{it^n})\in G_v$ for every integer $i$ coprime to $p$. In particular the set 
\[X=\{\eta(a^i,b^i,a^{it},b^{it},\ldots,a^{it^n},b^{it^n})^x\mid x\in H,\ (i,p)=1\}\] is a subset of $G_v$ and hence $|X|\le m$. 
   
Since the nilpotency class of $H$ is $n$-bounded, Lemma \ref{Pavel} guarantees that the order of the element $\eta(a,b,a^t,b^t,\ldots,a^{t^n},b^{t^n})$ is $(n,m)$-bounded. Thus, we have shown that the order of an arbitrary $v$-value in $G$ is bounded by a number depending only on $n$ and $m$. We conclude that the order of $v(G)$ is $(m,n)$-bounded. Therefore in the particular case where $G'$ is a $p$-group the theorem is proved. It is important to note that we proved existence of a bound, say $B$, for $|v(G)|$ which does not depend on $p$.

The case where $G'$ is not necessarily a $p$-group will be dealt with using familiar arguments, similar to those employed in the proof of Theorem \ref{rc}.
Let $p_1,\dots,p_s$ be the set of prime divisors of the order of $G'$. Since the case where $s=1$ was considered in the previous paragraphs, we assume that $s\geq2$. Observe that any Hall subgroup of $G'$ is normal in $G$. For each $i=1,\dots,s$ let $N_i$ denote the Hall ${p_i}'$-subgroup of $G'$. The result obtained in the case where $G'$ is a $p$-group implies that for any $i$ the image of $v(G)$ in $G/N_i$ has order at most $B$. It follows that $v(G)$ embeds into a direct product of abelian groups of order at most $B$. Therefore the exponent of $v(G)$ divides $B!$. Thus $v(G)$ is an abelian group with $m$ generators and exponent dividing $B!$. We conclude that the order of $v(G)$ is $(m,n)$-bounded, as required.
  \end{proof}

\section{Theorem \ref{rb}}

In this section we will prove that all words that imply virtual nilpotency of finitely generated metabelian groups are boundedly concise in the class
of residually finite groups (Theorem \ref{rb}). The class of such words is fairly large. It coincides with that of the words $w$ such that $w$ is not a law in the wreath product $C_n\wr C$ for any $n$ (see \cite{bume}). Here $C_n$ denotes the cyclic group of order $n$ and $C$ the infinite cyclic group. In particular, any word of the form $uv^{-1}$, where $u$ and $v$ are positive words, is of that kind. Other examples include generalizations of the Engel words like $[x^{n_1},y^{n_2},\dots,y^{n_k}]$ for integers $n_1,n_2,\dots,n_k$.

The classical result of Turner-Smith says that every word is concise in the class of groups all of whose quotients are residually finite \cite{TS2}. In particular, every word is concise in the class of virtually nilpotent groups. Combining this with arguments along the lines of \cite[Appendix]{fernandez-morigi} we will establish the following proposition. 

\begin{proposition}\label{quantitative} Let $c,t$ be positive integers and let $\mathcal X$ be the class of groups having a normal subgroup of finite index at most $t$ which is nilpotent of class at most $c$. 
Then every word is boundedly concise in $\mathcal X$.
\end{proposition}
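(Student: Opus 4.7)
The plan is to split the proof into two stages. The first stage establishes ordinary (qualitative) conciseness of an arbitrary word $w$ in $\mathcal X$, by cutting down to a finitely generated subgroup on which Turner-Smith's theorem \cite{TS2} applies directly. The second stage upgrades this qualitative statement to a uniform bound by an ultraproduct/compactness argument in the style of the appendix of \cite{fernandez-morigi}.

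For the qualitative step, let $G\in\mathcal X$ with $|G_w|\le m$. For each $a\in G_w$ I would fix one representation $a=w(g_{a,1},\ldots,g_{a,k})$ and set
\[
H=\langle g_{a,i}\mid a\in G_w,\,1\le i\le k\rangle.
\]
Then $H$ is finitely generated, $H\cap N$ is a normal subgroup of $H$ of index at most $t$ and nilpotency class at most $c$, so $H\in\mathcal X$; and by construction each $a\in G_w$ is already a $w$-value in $H$, so $H_w=G_w$ and consequently $w(G)=w(H)$. Being finitely generated and virtually nilpotent, $H$ is polycyclic-by-finite, and this class is closed under quotients, so every quotient of $H$ is residually finite. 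Turner-Smith's theorem then yields $|w(H)|<\infty$, and hence $|w(G)|<\infty$.

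For the quantitative step I would argue by contradiction. Assuming no bound $\nu(\mathcal X,w,m)$ exists, pick $G_n\in\mathcal X$ with $|(G_n)_w|\le m$ but $|w(G_n)|\to\infty$, fix a non-principal ultrafilter $\mathcal U$ on $\mathbb N$, and form $G^{*}=\prod_n G_n/\mathcal U$. One checks that $\mathcal X$ is closed under ultraproducts: the ultralimit $N^{*}$ of the chosen nilpotent normal subgroups is normal and nilpotent of class $\le c$ in $G^{*}$ by \L{}o\'{s}'s theorem, and a choice of a $t$-element transversal of $N_n$ in each $G_n$ transfers, via the ultrafilter pigeonhole, to a $t$-element transversal of $N^{*}$ in $G^{*}$. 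Similarly $|(G^{*})_w|\le m$. Applying the qualitative statement of Stage 1 to $G^{*}$ gives $|w(G^{*})|=L<\infty$. The first-order sentence ``there exist $L$ elements of $G$ forming a subgroup that contains every $w$-value'' holds in $G^{*}$ with the elements of $w(G^{*})$ as witnesses, and by \L{}o\'{s}'s theorem holds in $G_n$ for $\mathcal U$-almost every $n$. This forces $|w(G_n)|\le L$ for $\mathcal U$-almost every $n$, contradicting $|w(G_n)|\to\infty$.

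The delicate point in this plan is the finitely generated reduction in Stage 1: naive choices such as $H=\langle G_w\rangle$ or $H=\langle G_w,\ \text{transversal of } C_G(G_w)\rangle$ need not satisfy $H_w=G_w$, so Turner-Smith applied to them would only bound $w(H)$, not $w(G)$. Carrying inside $H$ an explicit $k$-tuple of preimages for every $w$-value is precisely what forces $w(G)=w(H)$, and is the only nontrivial ingredient of Stage 1. Stage 2 is then essentially formal, modulo the straightforward first-order verification that $\mathcal X$ is closed under ultraproducts.
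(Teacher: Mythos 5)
Your proposal is correct, and while it shares the paper's overall architecture (reduce to Turner--Smith's theorem via an ultraproduct compactness argument), the implementation differs in two genuine ways. First, you make explicit the finitely generated reduction in Stage~1: the paper simply asserts that Turner--Smith \cite{TS2} implies conciseness in virtually nilpotent groups, which strictly speaking requires exactly the step you isolate, namely passing to a subgroup $H$ generated by $k$-tuples of preimages of the $w$-values so that $H_w=G_w$ and $H$ is polycyclic-by-finite (an arbitrary virtually nilpotent group need not have all quotients residually finite --- $\mathbb{Q}$ already fails --- so the theorem cannot be applied to the ultraproduct wholesale). Spelling this out is a genuine improvement in rigor over the paper's one-line ``in particular''. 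Second, the direction of transfer is reversed. The paper transfers \emph{largeness} upward: using the bounded-width machinery of \cite{fernandez-morigi} (Lemmas \ref{subset of bounded width}, \ref{omega into product} and \ref{cardinality of the image}) it plants, for each $k$, a subset of size at least $k$ inside $w(Q)$ for the ultraproduct $Q$, concluding that $w(Q)$ is infinite and contradicting conciseness. You instead apply the qualitative result \emph{to} the ultraproduct to get $|w(G^{*})|=L<\infty$ and transfer \emph{finiteness} downward via the first-order sentence ``there exist $L$ elements forming a subgroup containing all $w$-values'', which by \L{}o\'{s} bounds $|w(G_n)|$ by $L$ for $\UU$-almost all $n$. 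Your route avoids the width lemmas entirely at the cost of a slightly longer first-order sentence; both are standard and correct, and the closure of $\mathcal X$ under ultraproducts is handled the same way in both (your transversal/pigeonhole remark is exactly Lemma \ref{Xultraproducts}).
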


The proof of the above proposition uses the concept of ultraproducts. The details concerning this construction can be found, for example, in \cite{fernandez-morigi}. The following result is Lemma A.5 in \cite{fernandez-morigi}.

\begin{lemma}
\label{cardinality of the image}
Let $\GG=\{G_i\}_{i\in\N}$ be a family of groups, and for every $i\in\N$, let $S_i$ be a non-empty finite subset of $G_i$. 
If $\UU$ is an ultrafilter over $\N$, then the cardinality of the image $\bar S$ of $S=\prod_{i\in\N}\, S_i$ in the ultraproduct $\GG_{\UU}$ is given by
\begin{equation}
\label{cardinality S bar}
|\overline S| = \sup_{J\in\,\UU} \, \Big( \min_{i\in J}\, |S_i|\Big),
\end{equation}
provided that the supremum is finite, and $\overline S$ is infinite otherwise.
In particular:
\begin{enumerate}
\item
If $|S_i|\le k$ for all $i$, then $|\overline S|\le k$.
\item If the ultrafilter $\UU$ is non-principal and $|S_i|\ge k$ for big enough $i$, then $|\overline S|\ge k$.
\end{enumerate}
\end{lemma}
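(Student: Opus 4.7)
The plan is to prove the two directions of the equality separately, using only the defining equivalence relation of the ultraproduct: two sequences $(x_i), (y_i) \in \prod_i S_i$ represent the same class in $\GG_\UU$ exactly when $\{i : x_i = y_i\} \in \UU$.

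For the upper bound $|\overline S| \le \sup_{J \in \UU} \min_{i \in J} |S_i|$, I would pick any $N$ distinct classes $\bar x^{(1)}, \dots, \bar x^{(N)} \in \overline S$ and representatives $(x_i^{(j)})_i \in \prod_i S_i$ for $j=1,\dots,N$. For each pair $j \neq k$ the set $J_{jk} = \{i : x_i^{(j)} \neq x_i^{(k)}\}$ lies in $\UU$; intersecting the finitely many $J_{jk}$ gives a set $J \in \UU$ on which all $N$ sequences are pointwise pairwise distinct, so $|S_i| \ge N$ for every $i \in J$. Hence $\sup_{J' \in \UU} \min_{i \in J'} |S_i| \ge N$, and so this supremum dominates $|\overline S|$; in particular, if $\overline S$ is infinite then the supremum is $\infty$ as well.

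For the reverse inequality, assume first that $\sup = m$ is finite. Since the values $\min_{i \in J} |S_i|$ are positive integers, the supremum is attained: there exists $J_0 \in \UU$ with $|S_i| \ge m$ for all $i \in J_0$. For each $i \in J_0$ I would choose $m$ pairwise distinct elements $x_i^{(1)}, \dots, x_i^{(m)}$ of $S_i$, and for $i \notin J_0$ any elements of the (non-empty) $S_i$. The resulting $m$ sequences define classes $\bar x^{(1)}, \dots, \bar x^{(m)} \in \overline S$ with $\{i : x_i^{(j)} \neq x_i^{(k)}\} \supseteq J_0 \in \UU$ for $j \neq k$, so these classes are pairwise distinct in the ultraproduct. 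If instead $\sup = \infty$, the same construction with arbitrarily large $N$ in place of $m$ (taking any $J_N \in \UU$ with $\min_{i \in J_N}|S_i| \ge N$) produces arbitrarily many distinct classes, showing $\overline S$ is infinite.

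The two ``in particular'' consequences then drop out immediately: for (1), the hypothesis forces every term $\min_{i \in J}|S_i|$ to be at most $k$, hence the supremum is at most $k$; for (2), non-principality of $\UU$ ensures that the cofinite set $J = \{i : |S_i| \ge k\}$ belongs to $\UU$, yielding $\min_{i \in J} |S_i| \ge k$ and hence $|\overline S| \ge k$. There is no real obstacle here beyond careful bookkeeping with the ultrafilter axioms; the only mild subtlety is the observation that a supremum of positive integers, if finite, is attained, which is what allows the construction to produce exactly $m$ distinct classes rather than merely approximating that bound.
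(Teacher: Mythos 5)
Your proof is correct and complete. Note that the paper itself gives no proof of this lemma: it is quoted verbatim as Lemma A.5 of the cited work of Fern\'andez-Alcober and Morigi \cite{fernandez-morigi}, so there is no internal argument to compare against; your write-up supplies exactly the standard self-contained argument from the definition of the ultraproduct (distinct classes force pointwise distinctness on a set of the ultrafilter, and conversely a witnessing set $J_0\in\UU$ lets you build the required number of pairwise distinct classes). The two points you flag --- that a finite supremum of a set of positive integers is attained, and that a non-principal ultrafilter on $\N$ contains every cofinite set --- are precisely the only subtleties, and you handle both correctly.
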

The next lemma shows that the class $\mathcal X$ is closed under taking  ultraproducts of its members.
\begin{lemma}\label{Xultraproducts} Let $\mathcal X$ be as in Proposition \ref{quantitative}. If $\GG=\{G_i\}_{i\in \N}$ is a family of groups in $\mathcal X$ and $\UU$ is an ultrafilter over $\N$, then the ultraproduct $\GG_{\UU}$ is again in $\mathcal X$.
\end{lemma}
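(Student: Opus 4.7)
The plan is to build a subgroup of $\GG_{\UU}$ witnessing membership in $\mathcal X$ by taking, componentwise, the nilpotent subgroups that witness membership of each $G_i$. For each $i\in\N$, fix a normal subgroup $N_i$ of $G_i$ with $[G_i:N_i]\le t$ and $N_i$ nilpotent of class at most $c$, together with a transversal $T_i\subseteq G_i$ of $N_i$ in $G_i$ of size at most $t$. Let $N_{\UU}$ denote the image in $\GG_{\UU}$ of $\prod_{i\in\N} N_i$, and let $\bar T$ be the image of $\prod_{i\in\N} T_i$ in $\GG_{\UU}$.

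First I would verify that $N_{\UU}$ is a normal subgroup of $\GG_{\UU}$: this is immediate, for if $(n_i)_i$ represents an element of $N_{\UU}$ and $(g_i)_i$ is an arbitrary representative in $\prod_i G_i$, then $(g_i^{-1}n_ig_i)_i$ represents an element of $N_{\UU}$ since $g_i^{-1}n_ig_i\in N_i$ for every $i$. Next I would argue nilpotency of class at most $c$: the commutator word $\gamma_{c+1}(x_1,\dots,x_{c+1})$ vanishes identically on every $N_i$, and because it is evaluated componentwise on representing tuples, it vanishes identically on $N_{\UU}$ as well.

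For the index bound, given an arbitrary $g=(g_i)_i/\UU\in\GG_{\UU}$, for each $i$ pick the unique $t_i\in T_i$ with $g_it_i^{-1}\in N_i$; this exhibits $g$ as $\bar t\cdot\bar n$ with $\bar t\in\bar T$ and $\bar n\in N_{\UU}$. Hence $\GG_{\UU}=\bar T\cdot N_{\UU}$, and Lemma \ref{cardinality of the image}(1) applied with $k=t$ yields $|\bar T|\le t$. Consequently $[\GG_{\UU}:N_{\UU}]\le t$, and $\GG_{\UU}$ lies in $\mathcal X$.

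No serious obstacle is anticipated: the argument is a routine ultraproduct computation. The only point requiring a little care is that the transversals $T_i$ can be chosen of size bounded by the same constant $t$ independently of $i$, which is precisely ensured by the uniform index bound built into the definition of $\mathcal X$; this is what allows Lemma \ref{cardinality of the image}(1) to be invoked and produces the same bound $t$ on the index in the ultraproduct.
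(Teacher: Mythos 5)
Your proof is correct and follows essentially the same route as the paper's: both take the componentwise product $\prod_{i}N_i$, observe that its image in the ultraproduct is normal and nilpotent of class at most $c$, and invoke Lemma \ref{cardinality of the image}(1) to bound the index by $t$. The only cosmetic difference is that you apply the cardinality lemma to transversals $T_i$, whereas the paper applies it to the sets $G_i/N_i$ after identifying $\GG_{\UU}/\bar N$ with the ultraproduct of the quotients $G_i/N_i$.
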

\begin{proof}
For each $i\in \N$ let $N_i$ be a normal subgroup of $G_i$ of finite index at most $t$ such that $N_i$ is nilpotent of class at most $c$. Then $N=\prod_{i\in \N}\,N_i$ is a normal subgroup of the Cartesian product $\prod_{i\in \N}\,G_i$. Since $N$ is nilpotent of class at most $c$, so is its image $\bar N$ in the ultraproduct $\GG_{\UU}$.
 
It remains to prove that $\bar N$ has index at most $t$ in $\GG_{\UU}$. This amounts to proving that the order of the quotient group $\GG_{\UU}/\bar N$ is at most $t$. But $\GG_{\UU}/\bar N$ is isomorphic to the ultraproduct modulo $\UU$ of the family of groups $\{G_i/N_i\}_{i\in I}$, so we can apply Lemma \ref{cardinality of the image} (1) with the sets $S_i=G_i/N_i$ and the result follows.
\end{proof}

If $w$ is a word and $S$ is a subset of $w(G)$, we say that $w$ has width $k$ over $S$, where $k$ is a natural number, if every element of $S$ can be expressed as the product of at most $k$ elements in $G_w\cup G_w^{-1}$. The next two lemmas are Lemma A.6 and Lemma A.7 in \cite{fernandez-morigi}, respectively.
\begin{lemma}
\label{omega into product}
Let $w$ be a word, and let $\{G_i\}_{i\in I}$ be a family of groups.
Suppose that $S_i\subseteq w(G_i)$ for every $i\in I$, and that the width of $w$
can be uniformly bounded over all the subsets $S_i$.
Then
\[
\prod_{i\in I}\, S_i \subseteq w\left(\prod_{i\in I} \, G_i\right).
\]
\end{lemma}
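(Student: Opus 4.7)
The plan is to show that any tuple $(s_i)_{i\in I}\in\prod_{i\in I}S_i$ lies in $w(G)$, where $G=\prod_{i\in I}G_i$. By the uniform width hypothesis, each $s_i$ admits a factorization $s_i=v_{i,1}\cdots v_{i,k_i}$ with $k_i\le k$ and $v_{i,j}\in G_w(G_i)\cup G_w(G_i)^{-1}$. Since $1=w(1,\ldots,1)$ is always a $w$-value, I can append trivial factors to normalize and assume $k_i=k$ for every $i$, at the cost of possibly doubling the uniform length bound.

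The next key observation is that a word evaluated in a Cartesian product acts componentwise: if $a_\ell=(a_{\ell,i})_i\in G$ then $w(a_1,\ldots,a_r)=(w(a_{1,i},\ldots,a_{r,i}))_i$. Consequently one has the set equality $G_w(G)=\prod_i G_w(G_i)$ as subsets of $G$, and similarly $G_w(G)^{-1}=\prod_i G_w(G_i)^{-1}$: for the nontrivial inclusion, any tuple whose $i$-th coordinate is a $w$-value in $G_i$ can be lifted to a single $w$-value in $G$ by choosing preimages of the arguments componentwise.

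The remaining, and only genuinely tricky, issue is that for a fixed position $j$ the sign of the factor $v_{i,j}$ may depend on $i$: some $v_{i,j}$ lie in $G_w(G_i)$ while others lie in $G_w(G_i)^{-1}$, so $(v_{i,j})_i$ need not be a $w$-value or the inverse of one in $G$. I would resolve this by splitting each factor as $v_{i,j}=v^+_{i,j}v^-_{i,j}$, where $v^+_{i,j}=v_{i,j}$ and $v^-_{i,j}=1$ if $v_{i,j}\in G_w(G_i)$, and $v^+_{i,j}=1$, $v^-_{i,j}=v_{i,j}$ otherwise. Then the tuples $V^+_j=(v^+_{i,j})_i$ and $V^-_j=(v^-_{i,j})_i$ lie in $G_w(G)$ and $G_w(G)^{-1}$ respectively by the componentwise identification above, and the factorization $(s_i)_i=V^+_1V^-_1\cdots V^+_kV^-_k$ exhibits $(s_i)_i$ as a product of $w$-values and their inverses in $G$, hence an element of $w(G)$. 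The main conceptual obstacle is precisely this sign-decoupling device; once one notices that padding by $1$'s is free on either side of $G_w\cup G_w^{-1}$, the rest is formal.
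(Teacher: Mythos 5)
Your argument is correct: the componentwise evaluation of words in a Cartesian product, the padding by $1=w(1,\dots,1)$, and the sign-decoupling $v_{i,j}=v^+_{i,j}v^-_{i,j}$ together show that each coordinate-wise factorization assembles into a product of $2k$ elements of $G_w\cup G_w^{-1}$ in $\prod_i G_i$. The paper itself gives no proof (it cites Lemma A.6 of Fern\'andez-Alcober and Morigi), and your argument is essentially the standard one found there, so there is nothing to add.
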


\begin{lemma}
\label{subset of bounded width}
Let $\omega$ be a word, and let $G$ be a group such that $|\omega(G)|\ge k$, where
$k$ is a positive integer.
Then, there exists a subset $S$ of $\omega(G)$ such that $|S|\ge k$ and $\omega$
has width less than $k$ over $S$.
\end{lemma}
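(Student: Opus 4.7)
The plan is to exploit the natural length filtration of the verbal subgroup. For each integer $j\ge 0$, let $S_j$ denote the set of elements of $\omega(G)$ expressible as a product of at most $j$ factors drawn from $G_\omega\cup G_\omega^{-1}$, with the convention $S_0=\{1\}$. These sets form an ascending chain $S_0\subseteq S_1\subseteq S_2\subseteq\cdots$ whose union is all of $\omega(G)$, and by construction the width of $\omega$ over each $S_j$ is at most $j$.

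The decisive observation is a stability lemma for this chain: if $S_j=S_{j+1}$ for some $j$, then $S_j$ already equals $\omega(G)$. Indeed, $S_j=S_{j+1}$ means that right multiplication by any element of $G_\omega\cup G_\omega^{-1}$ preserves $S_j$; since $1\in S_j$, iterating this shows that every finite product of elements of $G_\omega\cup G_\omega^{-1}$ lies in $S_j$, whence $S_j=\omega(G)$. Consequently, so long as $S_j\subsetneq\omega(G)$, the inclusion $S_j\subseteq S_{j+1}$ is strict and $|S_{j+1}|\ge|S_j|+1$.

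To conclude, assuming $|\omega(G)|\ge k$, let $j_0$ be the least index with $|S_{j_0}|\ge k$. No plateau can occur at any $j<j_0$, since a plateau there would force $|S_j|=|\omega(G)|\ge k$, contradicting the minimality of $j_0$. Hence the chain strictly increases through the first $j_0$ steps. Starting from $|S_0|=1$, one therefore obtains $|S_{j_0-1}|\ge j_0$ when $j_0\ge 1$, and combined with the minimality bound $|S_{j_0-1}|<k$ this gives $j_0<k$; the degenerate case $j_0=0$ forces $k=1$ and is handled trivially by $S=\{1\}$. Setting $S:=S_{j_0}$ then yields a subset of $\omega(G)$ with $|S|\ge k$ over which $\omega$ has width at most $j_0<k$, as required. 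There is no serious obstacle: the only point needing care is the plateau observation, and the resulting bookkeeping that converts the condition ``width strictly less than $k$'' into a bound on the length of the filtration.
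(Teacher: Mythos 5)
Your proof is correct: the filtration $S_0\subseteq S_1\subseteq\cdots$ by width, the plateau observation that $S_j=S_{j+1}$ forces $S_j=\omega(G)$, and the resulting counting argument give exactly the required set $S$. Note that the paper itself offers no proof of this statement—it quotes it as Lemma A.7 of the cited article of Fern\'andez-Alcober and Morigi—and your argument is essentially the same as the one given in that reference, so there is nothing further to reconcile.
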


The proof of Proposition \ref{quantitative} will now be short.

\begin{proof}[Proof of Proposition \ref{quantitative}.]
Let $w$ be any word. We need to prove that there exists a function $f:\N\rightarrow\N$ such that if $G$ is a group in $\mathcal X$ with $|G_{w}|\le m$, then $|w(G)|\le f(m)$. By way of contradiction, assume that there is a family $\{G_i\}_{i\in\N}$ of groups in $\mathcal X$ such that
$|(G_i)_{w}|\le m$ for all $i$ but nevertheless $\lim_{i\to\infty} \, |w(G_i)|=\infty$. Let us fix an arbitrary positive integer $k$. According to Lemma \ref{subset of bounded width}, if $i$ is big enough, there is a subset
$S_i$ of $w(G_i)$ such that $|S_i|\ge k$ and $w$ has width less than $k$ over $S_i$. We complete the sequence $\{S_i\}_{i\in\N}$ by choosing the first terms equal to $1$. Now, if $G=\prod_{i\in\N} \, G_i$ and $S=\prod_{i\in\N} \, S_i$, we have
\[
G_{w} = \prod_{i\in\N} \, (G_i)_{w},
\quad
\textrm{and}
\quad
S \subseteq w(G),
\]
where the last inclusion follows from Lemma \ref{omega into product}.
Consider now a non-principal ultrafilter $\UU$ over $\N$, and let $Q=\GG_{\UU}$ be the corresponding ultraproduct. By Lemma \ref{Xultraproducts} $Q$ is in $\mathcal X$. Then $Q_{w}=\overline{(G_{w})}$ and
$w(Q)=\overline{w(G)}\supseteq \overline S$. By applying Lemma \ref{cardinality of the image}, we obtain that $|Q_{w}|\le m$ and $|w(Q)|\ge k$.
Since $k$ is arbitrary, we conclude that the verbal subgroup $w(Q)$ is infinite. This is a contradiction, since $Q$ is virtually nilpotent and
\cite{TS2} says that $w(Q)$ must be finite whenever $Q_w$ is finite.
\end{proof}

\begin{proof}[Proof of Theorem \ref{rb}] Let $w$ be a word implying virtual nilpotency of finitely generated metabelian groups, 
and let $G$ be a finite group in which $w$ has at most $m$ values. It is sufficient to show that the order of $w(G)$ is bounded in terms of $w$ and $m$. Assume that $w$ involves $k$ variables. It follows that $G$ contains a subgroup $H$ that can be generated by at most $mk$ elements such that $G_w=H_w$. It is enough to show
that the order of $w(H)$ is bounded in terms of $w$ and $m$. Therefore we can work with the group $H$ in place of $G$. Hence, without loss of generality we assume that $G$ can be generated by at most $mk$ elements. As usual, $C_G(w(G))$ has $m$-bounded index in $G$. Set $N=C_G(w(G))$. Since $w$ is a law in the quotient $N/w(N)$, the theorem of Burns and Medvedev \cite[Theorem A]{bume} says that $N/w(N)$ has a normal nilpotent subgroup $M/w(N)$ such that the nilpotency class of $M/w(N)$ and the exponent of $N/M$ are bounded in terms of $w$ only. Since $G$ is $mk$-generated and $N$ has $m$-bounded index in $G$, it follows that $N$ has a $(k,m)$-bounded number of generators. So the group $N/M$ has bounded exponent and bounded number of generators. The solution of the 
restricted Burnside problem now tells us that $N/M$ has $(k,m)$-bounded order and therefore $M$ has $(k,m)$-bounded index in $G$. Observe that $w(N)$ is contained in the center of $N$. As $M/w(N)$ has bounded nilpotency class, the same holds for $M$. Therefore with appropriate choice of $c$ and $t$, depending only on $m$ and $w$, the group $G$ belongs to the class $\mathcal X$ of groups having a normal subgroup of finite index at most $t$ and nilpotency class at most $c$. An application of Proposition \ref{quantitative} now completes the proof.
\end{proof}
\bibliographystyle{amsplain}

\end{document}